\documentclass[a4paper]{amsart}

\usepackage{framed}
\usepackage{amsmath}
\usepackage{amssymb}
\usepackage{amsmath}
\usepackage{mathrsfs}
\usepackage{amsthm}
\usepackage[all]{xy}
\usepackage{color}
\usepackage[top=23truemm,bottom=23truemm,left=28truemm,right=28truemm]{geometry}

\newcommand{\ra}{\rightarrow}

\newcommand{\hra}{\hookrightarrow}
\newcommand{\thra}{\twoheadrightarrow}
\newcommand{\lra}{\longrightarrow}

\newcommand{\id}{\mathop{\mathrm{id}}\nolimits}

\newcommand{\sepK}{K^{\rm sep}}
\newcommand{\rk}{\mathop{\mathrm{rk}}}
\newcommand{\I}{\cI_\phi}
\newcommand{\Ip}{\cI_{\phi,\frp}}
\newcommand{\T}{\cT(\pi_\frp\langle\phi\rangle)}
\newcommand{\Lp}{\pi_\frp\langle \phi \rangle}

\newcommand{\Ker}{\mathop{\mathrm{Ker}}}

\providecommand{\Hom}[3]{{\rm Hom^{ }}_{#1}(#2,#3)}

\providecommand{\End}[2]{{\rm End^{ }}_{#1}(#2)}

\providecommand{\Gal}[2]{{\rm Gal}(#1/#2)}

\providecommand{\T}[1]{\tilde{#1}}

\providecommand{\ad}[1]{#1\{ \tau \}}

\makeatletter 
\@tempcnta\z@
\loop\ifnum\@tempcnta<26
\advance\@tempcnta\@ne
\expandafter\edef\csname rm\@Alph\@tempcnta\endcsname{\noexpand\mathrm{\@Alph\@tempcnta}}
\expandafter\edef\csname s\@Alph\@tempcnta\endcsname{\noexpand\mathscr{\@Alph\@tempcnta}}
\expandafter\edef\csname b\@Alph\@tempcnta\endcsname{\noexpand\mathbb{\@Alph\@tempcnta}}
\expandafter\edef\csname c\@Alph\@tempcnta\endcsname{\noexpand\mathcal{\@Alph\@tempcnta}}
\expandafter\edef\csname rm\@alph\@tempcnta\endcsname{\noexpand\mathrm{\@alph\@tempcnta}}
\expandafter\edef\csname fr\@alph\@tempcnta\endcsname{\noexpand\mathfrak{\@alph\@tempcnta}}
\expandafter\edef\csname fr\@Alph\@tempcnta\endcsname{\noexpand\mathfrak{\@Alph\@tempcnta}}
\repeat

\theoremstyle{breeak}	
\newtheorem{thm}{Theorem}[section]
\newtheorem{prop}[thm]{Proposition}
\newtheorem{lem}[thm]{Lemma}

\newtheorem{cor}[thm]{Corollary}

\theoremstyle{definition}
\newtheorem{definition}[thm]{Definition}
\newtheorem{rem}[thm]{Remark}
\newtheorem{ex}[thm]{Example}

 \makeatletter
    \@addtoreset{equation}{section}
  \makeatother


%
%
\title{Parametrization of virtually $K$-rational Drinfeld modules of rank two}
\date{}
\author{Yoshiaki Okumura}
%

\begin{document}
\maketitle
\thispagestyle{empty}


\begin{abstract}
For an extension $K/\mathbb{F}_q(T)$ of the rational function field over a finite field, we introduce the notion of virtually $K$-rational Drinfeld modules as a function field analogue of $\mathbb{Q}$-curves.
Our goal in this article is to prove that all virtually $K$-rational  Drinfeld modules of rank two with no complex multiplication are parametrized up to isogeny by  $K$-rational points of a quotient curve  of the  Drinfeld modular curve $Y_0(\mathfrak{n})$ with some square-free level $\mathfrak{n}$.
This is an analogue of Elkies' well-known result on $\mathbb{Q}$-curves.
\end{abstract}


\pagestyle{myheadings}
\markboth{Y.\ Okumura}{Parametrization of virtually $K$-rational Drinfeld modules of rank two}

\renewcommand{\thefootnote}{\fnsymbol{footnote}}
\footnote[0]{2010 Mathematics Subject Classification:\ Primary 11G09;\ Secondary 11G18.}
\renewcommand{\thefootnote}{\arabic{footnote}}
\renewcommand{\thefootnote}{\fnsymbol{footnote}}
\footnote[0]{Keywords:\ Drinfeld\ modules;\ Drinfeld modular curves.}
\renewcommand{\thefootnote}{\arabic{footnote}}


\section{Introduction}
 An elliptic curve  $E$ over an algebraic closure $\bar{\bQ}$ of the rational number field $\bQ$ is called  a {\it $\bQ$-curve} if $E$  is isogenous to the  conjugate ${}^sE$ for any  $s \in G_\bQ=\Gal{\bar{\bQ}}{\bQ}$.
For the first time, the notion of  $\bQ$-curves was   introduced by Gross \cite{Gross}  with a much more restrictive definition, and later considered  by 
 Ribet in a general setting.
There are  various interesting arithmetic aspects of $\bQ$-curves; 
for instance,  Ribet showed in \cite{Rib04} that all non-CM  $\bQ$-curves are  quotients of some  abelian varieties of ``{\it $\mathrm{GL}_2$-type}''  and every non-CM $\bQ$-curve is modular, meaning that it is a quotient of the modular Jacobian $J_1(N)_{\bar{\bQ}}$ for some $N$.

The condition of being  $\bQ$-curves  is invariant by isogeny and  so  the
classification of isogeny classes of $\bQ$-curves is a natural problem.
As an answer to this,  Elkies proved in \cite{Ep} that every non-CM $\bQ$-curve  is isogenous to that whose $j$-invariant  arising from a $\bQ$-rational point of $Y_*(N)$, the  quotient  of the elliptic modular curve $Y_0(N)$ of some square-free level $N$ by all Atkin-Lehner involutions.
(Notice that \cite{Ep} is unpublished but its
 revised version   \cite{E}  is available.
It is worth pointing out  that Elkies in fact considered ``$k$-curves'' for  an arbitrary number field $k$.)
For any square-free $N$,
Elkies also showed  that  
any $\bQ$-curve $E$ arising from  a point of $Y_*(N)(\bQ)$ can be defined over a  {\it  polyquadratic extension} of $\bQ$ (i.e., a finite abelian extension of $\bQ$ with  Galois group $G \cong (\bZ/2\bZ)^n$) and  that $E$ admits an isogeny to ${}^sE$ of degree dividing $N$ for any $s \in G_\bQ$.     
Such $E$ is called a {\it central} $\bQ$-curve of degree $N$.
Therefore the existence of non-CM $\bQ$-rational points of $Y_*(N)$ is equivalent to that of non-CM central $\bQ$-curves of degree $N$.
Elkies  conjectured that there are no non-CM $\bQ$-rational points of $Y_*(N)$ if $N$ is sufficiently large.

The purpose of this article is to prove a function field analogue of Elkies' result.
As is well-known,  there are many beautiful analogies between number fields and  function fields.
In \cite{Dri}, Drinfeld introduced  the analogue of elliptic curves under the name {\it elliptic modules}, which are today called {\it Drinfeld modules}.
Drinfeld modules share many arithmetic properties with  elliptic curves and so we may expect that there is a rich theory of a Drinfeld module  analogue of $\bQ$-curves.
Let $A=\bF_q[T]$ be the polynomial ring over a  fixed finite field $\bF_q$ of characteristic $p$ and let $Q=\bF_q(T)$  be the rational function field. 
Let $K$ be a field with $Q \subset K$.
Based on  the analogy, we introduce the notion of  {\it virtually $K$-rational} Drinfeld $A$-modules; see Definition \ref{defKvirtual}.
Let  $\frn \subset A$ be a non-zero ideal.
For $p \neq 2$,  it follows by  Proposition \ref{rat} that  rank-two Drinfeld $A$-modules arising from $K$-rational points of $Y_*(\frn)$  are  virtually $K$-rational.
Here $Y_*(\frn)$ is  the quotient of the Drinfeld modular curve $Y_0(\frn)$ by all Atkin-Lehner involutions.
Even if $p=2$, at least  all non-CM Drinfeld $A$-modules arising from $Y_*(\frn)(K)$ are  virtually $K$-rational.
Adapting  Elkies' graph-theoretic method  to Drinfeld $A$-modules, we obtain the main result:

 \begin{thm}\label{thmmain}
Let $\phi$ be a non-CM virtually $K$-rational Drinfeld $A$-module of rank two.
Then there exists a non-zero square-free ideal $\frn \subset A$, depending only on the isogeny class of $\phi$, such that
\begin{itemize}
\item[(i)] $\phi$ is isogenous to a Drinfeld $A$-module arising from a $K$-rational point of $Y_*(\frn)$,
\item[(ii)] if an ideal $\frn'$ satisfies {\rm (i)}, then $\frn \mid \frn'$.
\end{itemize} 
\end{thm}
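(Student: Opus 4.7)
The plan is to adapt Elkies' graph-theoretic argument for $\bQ$-curves to the Drinfeld setting. I first build the \emph{isogeny graph} $\mathcal{G}$ whose vertices are $\bar{K}$-isomorphism classes of rank-two Drinfeld $A$-modules in the isogeny class of $\phi$ and whose edges, labeled by prime ideals $\frp \subset A$, correspond to cyclic $\frp$-isogenies. Because $\phi$ has no complex multiplication, at each vertex the edges labeled by a fixed $\frp$ realize the $|\frp|+1$ directions in the Bruhat--Tits tree of $\mathrm{PGL}_2(Q_\frp)$, so the $\frp$-colored subgraph is a regular tree. Virtual $K$-rationality of $\phi$ says exactly that $G_K = \Gal{\sepK}{K}$ acts on $\mathcal{G}$ by label-preserving graph automorphisms via $[\psi] \mapsto [{}^\sigma \psi]$.

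Next, I analyse this action to extract the ideal $\frn$. For each $\sigma \in G_K$ choose an isogeny $\mu_\sigma \colon {}^\sigma \phi \to \phi$; since $\mathrm{End}^0(\phi) = Q$ by the no-CM hypothesis, $\mu_\sigma$ is unique up to $Q^\times$, and the map
\begin{equation*}
c(\sigma, \tau) = \mu_\sigma \cdot {}^\sigma \mu_\tau \cdot \mu_{\sigma\tau}^{-1} \in Q^\times
\end{equation*}
is the obstruction 2-cocycle attached to $\phi$. Decomposing prime-by-prime, the valuations $v_\frp(c)$ detect whether $\sigma$ moves the $\frp$-component of the vertex $[\phi]$ along the tree. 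Imitating Elkies, I would show that the class $[c] \in H^2(G_K, \bar{K}^\times)$ is killed by the order of the relevant Atkin--Lehner group, so the associated Galois character factors through a finite abelian quotient, and then define $\frn$ to be the square-free product of primes at which this character is non-trivial. Since $c$ depends only on the isogeny class of $\phi$, so does $\frn$.

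With $\frn$ in hand, the third step is to realise some isogenous $\phi'$ as a $K$-rational point of $Y_*(\frn)$. By the very construction of $\frn$, for each $\frp \mid \frn$ the Galois orbit of a distinguished cyclic $\frp$-isogeny out of $\phi'$ is finite and stable under the Atkin--Lehner involution $w_\frp$. Collecting these isogenies over all $\frp \mid \frn$ packages into a point $P \in Y_0(\frn)(\bar{K})$ whose $G_K$-orbit lies in the Atkin--Lehner coset of $P$, whence its image in $Y_*(\frn)$ is $K$-rational, establishing (i). For the minimality statement (ii), any $\frn'$ satisfying (i) supplies a Galois-stable cyclic $\frn'$-isogeny structure on an isogenous Drinfeld module; the associated permutation representation of $G_K$ on tree edges then splits the non-trivial characters detected by $c$, forcing every prime divisor of $\frn$ to divide $\frn'$.

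The main obstacle will be the cocycle analysis in the second step. Elkies exploits the coincidence $\bZ^\times = \{\pm 1\}$, whose order $2$ matches that of each Atkin--Lehner involution; in our setting $A^\times = \bF_q^\times$ is larger, so trivialising $c$ modulo the correct ``polyquadratic'' part requires first splitting off the automorphism contributions via a Hilbert~90 argument applied to $Q^\times$, and then controlling the residual $2$-torsion. The characteristic-$2$ subtleties already flagged in the introduction will presumably demand a separate treatment at this point, since the relevant quadratic characters and the Atkin--Lehner involutions interact differently when $p = 2$.
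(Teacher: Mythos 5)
Your plan departs from the paper's actual argument in a way that leaves several genuine gaps. The paper's proof is purely combinatorial: it works with the $\frp$-isogeny trees $\Ip$ one prime at a time, forms the minimal finite subtree $\cT(\pi_\frp\langle\phi\rangle)$ spanned by the Galois orbit of (the $\frp$-component of) $\phi$, observes that this subtree has a unique $G_K$-fixed \emph{center} which is either a vertex or an edge, and defines $\frn$ as the product of the primes $\frp$ for which the center is an edge. The restricted-product bijection $\I \cong \prod'_\frp \Ip$ then glues together the two endpoints of each central edge into a single pair $\psi, \psi'$ with a cyclic $\frn$-isogeny $\psi \to \psi'$; the Atkin--Lehner action mirrors exactly the Galois swap of the two halves of each central edge, so the $\cW(\frn)$-orbit of $[\psi\to\psi']$ is $G_K$-stable, giving (i). Minimality (ii) follows because if $\frp \mid \frn$ but $\frp \nmid \frn'$, the $Y_*(\frn')$-point would force a $G_K$-fixed vertex in $\Ip$, which the center-is-an-edge condition rules out.

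Your cocycle approach is a different route and is not carried far enough to be a proof. The object $c(\sigma,\tau) = \mu_\sigma \cdot {}^\sigma\mu_\tau \cdot \mu_{\sigma\tau}^{-1} \in Q^\times$ is a $2$-cocycle, not a character, so the phrase ``primes at which this character is non-trivial'' has no content until you extract a genuine homomorphism $G_K \to Q^\times/(Q^\times)^2$, e.g.\ via the degree map $\sigma \mapsto \deg\mu_\sigma$ (well-defined modulo squares in the non-CM case). Even granting that, you would still need the tree-center argument to show that the support of this character is exactly the square-free ideal $\frn$ for which a $K$-rational point of $Y_*(\frn)$ exists: the character only records parities of path-lengths and cannot by itself produce the distinguished pair $\psi, \psi'$ or the cyclic $\frn$-isogeny between them. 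Likewise, your minimality argument — ``the associated permutation representation on tree edges then splits the characters'' — is not a proof; the paper's (ii) requires the precise dichotomy between a fixed vertex and a fixed edge in each $\Ip$. Finally, the claim that $[c] \in H^2(G_K, \bar{K}^\times)$ is killed by the order of $\cW(\frn)$ is not established and is not the mechanism Elkies (or this paper) uses; the Brauer-group obstruction belongs to Ribet's $\mathrm{GL}_2$-type story, which is logically independent of the modular-curve parametrization being proved here. In short, you need to make the tree-center construction explicit and use the restricted product $\I \cong \prod'_\frp \Ip$ to assemble the $Y_0(\frn)$-point; without those two ingredients, both (i) and (ii) remain unproved.
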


This theorem gives an analogue of Elkies' classification of non-CM $\bQ$-curves.
Thus it allows us to relate a Diophantine problem on $Y_*(\frn)(K)$ with the existence of virtually $K$-rational Drinfeld $A$-modules; see also  Remark \ref{rem:central}.

The organization of this article is as follows.
In Section 2, we  review well-known  facts on Drinfeld $A$-modules and recall the definitions of     {\it degree} and  {\it dual} of isogenies, which are fundamental tools in our work. 
In Section 3, after showing basic properties of   Galois conjugates of Drinfeld $A$-modules, we define virtually $K$-rational Drinfeld $A$-modules of arbitrary rank.  
We also give a non-trivial example of virtually $Q$-rational Drinfeld $A$-modules  of rank two in Example \ref{exKvirtual}.
 In the remaining sections, 
we restrict our  attention  to  the rank-two and non-CM case.
Section 4 is devoted to a study of rational points of the curve $Y_*(\frn)$.
Using   moduli interpretation, we prove that if a $K$-rational point of $Y_*(\frn)$ satisfies some mild condition, then it gives raise to a family of   virtually $K$-rational  Drinfeld $A$-modules isogenous to each other; see Proposition \ref{rat}.
Finally, in Section 5, we give a proof of   Theorem \ref{thmmain}.
To find the  $\frn$ attached to a given virtually $K$-rational $\phi$  in Theorem \ref{thmmain}, we consider   a Galois action on an undirected tree (so called {\it isogeny tree}) associated with $\phi$.

\section{Isogenies of Drinfeld $A$-modules}

We  begin by fixing the notation.
As in Section 1, 
let $A=\bF_q[T]$ be the polynomial ring over the finite field $\bF_q$ with $q$-elements of characteristic $p>0$ and set  $Q=\bF_q(T)$.
Let $K$ be a field with $K \subset Q$ and fix an algebraic closure $\bar{K}$ of $K$.
Denote by $\sepK \subset \bar{K}$ the separable closure of $K$ and write $G_K:=\Gal{\sepK}{K}$ for its absolute Galois group.
Denote by $K\{ \tau \}$  the skew polynomial ring over $K$ in one variable $\tau$ satisfying $\tau c=c^q\tau$ for any $c \in K$.
It  is isomorphic to the ring $\End{\bF_q\mbox{\tiny -lin}}{\bG_{a,K}}$ of all $\bF_q$-linear endomorphisms of the additive group $\bG_{a,K}$ over $K$.
Define the {\it differential map}
\[
\partial:\ad{K} \ra K
\]
by $\partial(\sum_{i=0}^nc_i\tau^i)=c_0$.
It is an $\bF_q$-algebra homomorphism.

A {\it Drinfeld $A$-module}  over $K$ is an $\bF_q$-algebra homomorphism 
\[
\begin{array}{cccc}
\phi: & A & \ra & \ad{K} \\
 & a & \mapsto &  \phi_a
\end{array}
\]
 such that $\partial(\phi_a)=a$ for any $a \in A$ and $\phi_a \neq a $ for some $a \in A$.
By definition,  $\phi$   is  completely determined by  the image of $T$: $\phi_T=T + c_1\tau + \cdots + c_r\tau^r \in \ad{K}$ with $c_r \neq 0$.
The integer $r$ is called the {\it rank} of $\phi$ and denoted by $\rk \phi$.

\begin{rem}\label{remgeneralD}
Drinfeld modules are defined more generally: let $C$ be a smooth projective, geometrically irreducible curve over $\bF_q$ and let $\infty \in C$ be a fixed closed point.
Let $\cA$ be the ring of rational functions on $C$ regular outside $\infty$.
Let $\cF$ be a field equipped with a (not necessarily injective) $\bF_q$-algebra homomorphism $\iota:\cA \ra \cF$.
Then    a {\it Drinfeld $\cA$-module} over $\cF$ is an $\bF_q$-algebra homomorphism
\[
\phi:\cA \ra \ad{\cF}
\] 
such that  $\partial(\phi_a)=\iota(a)$ for any $a \in \cA$ and $\phi_a \neq \iota(a)$ for some $a \in \cA$.
In this paper, we mainly consider the case where $C=\bP^1_{\bF_q}$, $\infty=(1/T)$, $\cA=A=\bF_q[T]$, and $\cF=K$ with the inclusion $A \hra Q \subset K$.
\end{rem}

Let $\phi$ be a Drinfeld $A$-module over $K$ and let  $L$ be a  $K$-algebra.
For any $\lambda \in L$ and $\mu=\sum_{i=0}^{n}c_i\tau^i \in \ad{K}$, set
$\mu(\lambda):=\sum_{i=0}^{n}c_i\lambda^{q^i} \in L$.
Then $\phi$   endows the additive group $\bG_{a,K}(L)=L$ with a new $A$-module structure  by 
$a\cdot\lambda:=\phi_a(\lambda)$ for any $a \in A$ and $\lambda \in L$.
Denote  this  new $A$-module by ${}_\phi L$.
If $L=\bar{K}$, then 
for a non-zero $a \in A$, we denote the $A$-module of $a$-torsion points of $\phi$ by 
$
\phi[a]:=\left\{ \lambda \in {}_\phi\bar{K}; \phi_a(\lambda)=0 \right\}. 
$
It is in fact contained in   ${}_\phi\sepK$ because $\partial(\phi_a)=a \neq 0$.
Hence $G_K$ acts on $\phi[a]$.
For a non-zero ideal $\frn \subset A$, we also set
\[
\phi[\frn]:=\bigcap_{0 \neq a \in \frn} \phi[a].
\]
Clearly $\phi[\frn]=\phi[a]$  if  $\frn=(a)$.
It is known that $\phi[\frn]$ is a  $G_K$-stable free $A/\frn$-module of rank equal to $\rk \phi$.

Let $\phi$ and $\psi$ be Drinfeld $A$-modules over $K$.
An {\it isogeny} $\mu:\phi \ra \psi$  is a non-zero  element 
$\mu \in \ad{\bar{K}}$ such that 
\[
\mu\phi_a=\psi_a\mu
\]
 for any $a \in A$, and then $\phi$ is said to be {\it isogenous} to $\psi$.
Equivalently, $\mu$ is an isogeny if $\mu\phi_T^{}=\psi_T^{}\mu$ holds.
If $\mu \in \bar{K}^\times$, then  it is called  an {\it isomorphism}.
By definition, an isogeny $\mu:\phi \ra \psi$ induces a surjective $A$-module homomorphism
\[
\mu: {}_\phi\bar{K} \ra {}_\psi\bar{K}.
\]
Its kernel   $\Ker\mu$ is actually contained in ${}_\phi\sepK$ because $\mu$  satisfies $\partial(\mu) \neq 0$ by \cite[Chapter 1, $\S 4$]{DH87}.
Notice that  every isogeny between Drinfeld $A$-modules over $K$ is in fact contained in $\ad{\sepK}$ by \cite[Proposition 4.7.4.]{Goss}.

For any field $L/K$, if an isogeny  $\mu:\phi \ra \psi$ satisfies $\mu \in \ad{L}$, then it  is called an  
{\it $L$-rational isogeny} (or {\it $L$-isogeny}).
Two isogenies $\mu:\phi\ra\psi$ and $\eta:\phi' \ra \psi'$ are said to be  {\it $L$-equivalent} if there exist
$L$-rational isomorphisms $\nu:\phi \ra \phi'$ and $\lambda:\psi\ra\psi'$ such that $\eta=\lambda\mu\nu^{-1}$.
We write 
$
\Hom{L}{\phi}{\psi}:=\{\mu \in \ad{L};\ \mu\phi_T^{}=\psi_T^{}\mu \}
$ 
and $\End{L}{\phi}:=\Hom{L}{\phi}{\phi}$.
Then  $\phi_a \in \End{L}{\phi}$ for any $a \in A$.
Hence
 $
A
$ is a subring of   $\End{L}{\phi}$ and  $\Hom{L}{\phi}{\psi}$ becomes  an $A$-module by
$a\cdot\mu:= \mu\phi_a$.
It is known that 
 $\End{L}{\phi}$ is a commutative $A$-algebra and 
 a  free $A$-module of  rank $\leq \rk\phi$.
Therefore  $\End{L}{\phi} {\otimes_A}Q$ is a finite extension of $Q$.

The restriction of the differential map $\partial:\ad{K}\ra K$ to 
$\End{K}{\phi}$ induces an injective $A$-algebra homomorphism 
\[
\partial:\End{K}{\phi} \hra K,
\]
so that its  image  is an $A$-order of a finite extension $F/Q$ satisfying  $F \subset K$.
If $\rk\phi>1$ and $[F:Q]>1$, then we say that $\phi$ has {\it complex multiplication} (or {\it CM}) by $F$.
If in addition  $[F:Q]=\rk\phi$, then we especially say that $\phi$ has {\it full complex multiplication}.

\begin{rem}
Suppose that $\phi$ has CM by a finite extension $F/Q$.
Then $F$ has  only one place lying above the place 
$\infty=(1/T)$ of $Q$; see \cite[Proposition 4.7.17]{Goss} for instance.
If furthermore $\partial(\End{K}{\phi})$ is the ring of integers $\cO_F$ of $F$, then the inclusion $\End{K}{\phi} \hra \ad{K}$ determines a Drinfeld $\cO_F$-module $\Phi: \cO_F \ra \ad{K}$ such that 
 $\rk\Phi=\frac{\rk \phi}{[F:Q]}$ and $\Phi|_A=\phi$.
\end{rem}

Let $\mu:\phi \ra \psi$ be an isogeny between Drinfeld $A$-modules over $K$.
Since $\Ker\mu$ is a finite torsion $A$-submodule of ${}_\phi\sepK$, it  is isomorphic to $\bigoplus_{i=1}^nA/\frn_i$ for suitable $n$ and non-trivial  ideals $ \frn_i \subset A$.
Then  
the product 
\[
\deg \mu:=\prod_{i=1}^n \frn_i
\]
 is called the  {\it degree} of  $\mu$. 
The ideals $\frn_i$ are uniquely determined by $\mu$ and so is $\deg\mu$.

\begin{lem}\label{lemmulti}
Let $\mu:\phi_1 \ra \phi_2$ and $\eta:\phi_2 \ra \phi_3$ be isogenies.
Then $\deg\eta\mu=\deg\eta\deg\mu$.
\end{lem}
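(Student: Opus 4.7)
The plan is to reduce the multiplicativity to a purely algebraic fact about short exact sequences of finite torsion $A$-modules, applied to the kernels of $\mu$, $\eta$, and $\eta\mu$.

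First I would establish the short exact sequence of $A$-modules
\[
0 \lra \Ker\mu \lra \Ker(\eta\mu) \xrightarrow{\mu} \Ker\eta \lra 0.
\]
The inclusion on the left is clear, and $A$-linearity of all maps is automatic from the intertwining relations $\mu(\phi_1)_a = (\phi_2)_a\mu$ and $\eta(\phi_2)_a = (\phi_3)_a\eta$. For surjectivity onto $\Ker\eta$, I invoke the fact recalled in the excerpt that every isogeny induces a surjection ${}_{\phi_1}\bar{K} \thra {}_{\phi_2}\bar{K}$: given $y \in \Ker\eta$, choose $x \in {}_{\phi_1}\bar{K}$ with $\mu(x)=y$; then $(\eta\mu)(x)=\eta(y)=0$, so $x \in \Ker(\eta\mu)$ maps to $y$.

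The second step is the algebraic lemma: for any short exact sequence $0 \to M_1 \to M \to M_2 \to 0$ of finite torsion $A$-modules, the product of elementary divisor ideals defined in the excerpt is multiplicative. My preferred route goes via a characteristic-polynomial interpretation. For such a module $N$, let $\chi_N(X) \in \bF_q[X]$ be the characteristic polynomial of the $\bF_q$-linear action of $T$ on $N$, viewed as a finite-dimensional $\bF_q$-vector space. A companion-matrix computation gives $\chi_{A/(f)}=f$ for any monic $f \in A$, and so by the structure theorem $(\chi_N)=\prod_i \frn_i$ whenever $N\cong \bigoplus_i A/\frn_i$. Multiplicativity of characteristic polynomials of a common linear operator across a short exact sequence of finite-dimensional $\bF_q$-vector spaces (a block-triangular determinant identity relative to any basis extending one of $M_1$) then gives the desired multiplicativity of ideals.

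Combining the two steps on the short exact sequence of kernels yields $\deg(\eta\mu)=\deg\eta\cdot\deg\mu$. Step one is essentially routine; the only point requiring a moment of care is the characteristic-polynomial multiplicativity in step two, but this is classical and reduces to the block-triangular determinant identity, so I do not anticipate any genuine obstacle.
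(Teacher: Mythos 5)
Your proof is correct and follows exactly the paper's approach: the paper's entire proof consists of exhibiting the same short exact sequence $0 \to \Ker\mu \to \Ker(\eta\mu) \xrightarrow{\mu} \Ker\eta \to 0$, and you have simply supplied the routine algebraic verification (via characteristic polynomials of the $T$-action and the block-triangular determinant identity) that the paper leaves implicit.
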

\begin{proof}
It follows from the exact sequence 
$
0 \ra \Ker \mu \hra \Ker\eta\mu \overset{\mu}{\ra} \Ker\eta \ra 0.
$
\end{proof}

We see that  $\deg\mu=A$ if and only if $\mu$ is an isomorphism.
If $\deg\mu=\frn$, then it is called an {\it $\frn$-isogeny}.
If $\Ker\mu \cong A{/}\frn$, then it is called a {\it cyclic $\frn$-isogeny}.
For example, 
if $\frn=(a)$ for some $a \neq 0$ and  $\rk\phi=r$, then $\deg\phi_a=\frn^r$ since $\Ker \phi_a=\phi[\frn] \cong (A/\frn)^{\oplus r}$.
Hence
$\phi_a$ is an $\frn^r$-isogeny.

A finite $A$-submodule $\Lambda \subset {}_\phi\sepK$ with $\Lambda \cong A/\frn$ is called a {\it cyclic $\frn$-kernel} of $\phi$.
For such $\Lambda$, there is  a cyclic  $\frn$-isogeny 
$\mu:\phi \ra \psi$  with $\Ker\mu=\Lambda$ (cf. \cite[pp.\ 37]{DH87}) and $\mu$ is unique up to $\sepK$-equivalence.
If $\Lambda$ is $G_L$-stable for some field $L/K$, then $\Lambda$ is said to be {\it $L$-rational}.
It follows  that  $\Lambda$ is $L$-rational if and only if  $\mu$ can be taken to be $L$-rational.

\begin{prop}\label{propdual}
Let $\mu:\phi \ra \psi$ be a $K$-rational isogeny  and suppose that a non-zero $a \in A$  annihilates $\Ker\mu$.
Then there exists a unique  isogeny  
$
\eta:\psi \ra \phi
$
such that it is $K$-rational and 
 \[
\begin{array}{ccc}
\eta \mu=\phi_{a} & \mbox{and} &  \Ker \eta=\mu(\phi[a]), \\
 \mu \eta=\psi_{a} & \mbox{and} & \Ker \mu=\eta(\psi[a]).
\end{array}
\]
\end{prop}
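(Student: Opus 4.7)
The plan is to construct $\eta$ by a right-division argument in the skew polynomial ring $\bar{K}\{\tau\}$ and then verify all the required properties by manipulating isogeny identities.

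First, I would build $\eta$ as an element of $\bar{K}\{\tau\}$. Since $\mu$ is an isogeny, we have $\partial\mu \neq 0$, so $\mu$ is separable as an additive polynomial and $\Ker\mu \subset \bar{K}$ has exactly $q^{\deg_\tau\mu}$ elements. Because $a$ annihilates $\Ker\mu$, the containment $\Ker\mu \subset \phi[a]=\Ker\phi_a$ holds, and since $\phi_a$ is also separable ($\partial\phi_a=a\neq 0$ in $K$), the function $\mu(x)\mapsto \phi_a(x)$ is a well-defined $\bF_q$-linear map on the image of $\mu$. Extending by surjectivity of $\mu$ on $\bar{K}$ produces an additive polynomial $\eta\in\bar{K}\{\tau\}$ with $\eta\mu=\phi_a$, and $\eta\neq 0$ because $\phi_a\neq 0$. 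This $\eta$ is unique among elements of $\bar{K}\{\tau\}$ satisfying $\eta\mu=\phi_a$ by right-cancellation of the non-zero $\mu$ in the skew polynomial domain $\bar{K}\{\tau\}$.

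Next, I would verify the algebraic properties by right-cancelling $\mu$ from appropriate identities. For $K$-rationality, take any $s\in G_K$; applying $s$ to $\eta\mu=\phi_a$ and using that $\mu$ and $\phi_a$ are $K$-rational gives ${}^s\eta\cdot\mu=\eta\mu$, so ${}^s\eta=\eta$ and hence $\eta\in K\{\tau\}$. For the intertwining property, for any $b\in A$ compute
\[
\eta\psi_b\mu=\eta\mu\phi_b=\phi_a\phi_b=\phi_b\phi_a=\phi_b\eta\mu,
\]
and right-cancel $\mu$ to get $\eta\psi_b=\phi_b\eta$; thus $\eta:\psi\to\phi$ is a $K$-isogeny. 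Next, from the isogeny identity $\mu\phi_a=\psi_a\mu$ and $\phi_a=\eta\mu$ one gets $\mu\eta\mu=\psi_a\mu$, so $\mu\eta=\psi_a$ again by right-cancellation.

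Finally I would check the kernel descriptions and uniqueness. From $\eta\mu=\phi_a$: if $y\in\phi[a]$ then $\eta(\mu(y))=\phi_a(y)=0$, so $\mu(\phi[a])\subset\Ker\eta$; conversely any $z\in\Ker\eta$ is of the form $\mu(y)$ with $y\in\bar{K}$ (surjectivity of $\mu$), and then $\phi_a(y)=\eta\mu(y)=0$, so $y\in\phi[a]$. The same argument with the roles swapped, applied to $\mu\eta=\psi_a$ and using surjectivity of $\eta$ (which is non-zero), gives $\Ker\mu=\eta(\psi[a])$. Uniqueness of $\eta$ among $K$-isogenies $\psi\to\phi$ satisfying $\eta\mu=\phi_a$ was already noted.

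I do not expect a serious obstacle: the only subtlety is justifying the right-division $\phi_a=\eta\mu$ in $\bar{K}\{\tau\}$, but this is immediate from the separability of $\mu$ (guaranteed by $\partial\mu\neq 0$ for isogenies) together with the inclusion $\Ker\mu\subset\phi[a]$; everything else reduces to formal manipulation with right-cancellation in the skew polynomial domain.
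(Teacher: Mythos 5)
Your proof is correct and follows the same route the paper delegates to Goss (\S4.7 and the right-division algorithm): construct $\eta$ via right-division in $\bar{K}\{\tau\}$ using $\Ker\mu\subset\phi[a]$ and the separability of $\mu$, then derive the intertwining relation, $K$-rationality, $\mu\eta=\psi_a$, the two kernel descriptions, and uniqueness by right-cancellation of the nonzero $\mu$. The only informal step is ``extending by surjectivity of $\mu$\ldots produces an additive polynomial $\eta$,'' which you yourself flag at the end; it is cleaner to invoke Ore right-division $\phi_a=\eta\mu+r$ with $\deg_\tau r<\deg_\tau\mu$ and note that $r$ vanishes on the $q^{\deg_\tau\mu}$ elements of $\Ker\mu$, forcing $r=0$.
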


\begin{proof}
The existence of $\eta$ is well-known; see \cite[\S 4.7]{Goss} for example.
The uniqueness follows from 
the right division algorithm \cite[Proposition 1.6.2]{Goss}.
\end{proof}

Let $\mu:\phi \ra \psi$ be an $\frn$-isogeny and let $a_\frn \in A$ be  the monic generator of $\frn$.
Then  there exists a unique isogeny 
\[
\hat{\mu}:\psi \ra \phi
\] 
such that  $\hat{\mu}\mu=\phi_{a_\frn}$ and $\mu\hat{\mu}=\psi_{a_\frn}$ by Proposition \ref{propdual}.
We call $\hat{\mu}$    the {\it dual isogeny} of $\mu$.

\begin{rem}\label{remdegree}
If $\phi$ is of rank $r$, then $
\frn^r=\deg\phi_{a_\frn}=\deg\hat{\mu}\deg\mu=\frn\deg\hat{\mu}
$.
 Hence 
$
\deg\hat{\mu}=\frn^{r-1}.
$
In particular, we have  $\deg\mu=\deg\hat{\mu}$ if $\rk\phi=2$.
\end{rem}

By Proposition \ref{propdual}, 
any $K$-rational isogeny $\mu:\phi \ra \psi$ has the inverse $\mu^{-1}  $ in $\Hom{K}{\psi}{\phi}{\otimes_A}Q$ and hence  
  we have  the isomorphisms 
\[
\begin{array}{ccccc}
\End{K}{\phi}{\otimes_A}Q & \cong & \Hom{K}{\phi}{\psi}{\otimes_A}Q & \cong & \End{K}{\psi}{\otimes_A}Q \\
f & \mapsto  & \mu f & \mapsto  &\mu f\mu^{-1} 
\end{array}
\]
of $Q$-vector spaces.
Hence if $\phi$ has complex multiplication, then so does $\psi$.

\begin{definition}\label{defprimitive}
A $K$-rational  isogeny $\mu: \phi \ra \psi$ is said to be  {\it primitive} if there are no $K$-rational isogenies 
$\eta:\phi \ra \psi$ satisfying  $\deg\eta \mid \deg\mu$ and $\deg\eta \neq \deg\mu$.
\end{definition}

Let $\mu:\phi \ra \psi$ be a $K$-rational isogeny between non-CM Drinfeld $A$-modules.
Then Corollary \ref{cor:primitive} as below implies that
 $\mu$ is primitive if and only if it is cyclic.
In this case, it generates $\Hom{K}{\phi}{\psi}$ as an $A$-module and so every   $\eta \in \Hom{K}{\phi}{\psi}$ is given by $\eta=\mu\phi_a$ for some $a \in A$.

\begin{lem}\label{propnoncmdegree}
Let $\mu_1$ and $\mu_2$ be isogenies in $\Hom{K}{\phi}{\psi}$.
Suppose that $\phi$ has no CM.
Then $\deg\mu_1=\deg\mu_2$ if and only if $\mu_1=\xi\mu_2$ for some $\xi \in \bF_q^\times$.
\end{lem}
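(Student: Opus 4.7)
The plan is to handle each direction separately. The backward direction is immediate: any $\xi \in \bF_q^\times$ satisfies $\xi^q = \xi$, so it commutes with $\tau$ (and with $\psi_T$), hence represents a $K$-rational automorphism of $\psi$ in $\ad{K}$; consequently $\Ker(\xi\mu_2) = \Ker\mu_2$ and the degrees agree.

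For the forward direction, set $\frn := \deg\mu_1 = \deg\mu_2$ with monic generator $a_\frn \in A$, and let $r := \rk\phi$. The idea is to bring both isogenies onto one side by forming the composition
\[
f := \hat{\mu}_2 \mu_1 \in \End{K}{\phi}.
\]
The no-CM hypothesis pins down $\End{K}{\phi} = \phi(A)$: the image of the injection $\partial: \End{K}{\phi} \hra K$ is an $A$-order in a finite extension $F/Q$, but no-CM forces $F = Q$ (or $\rk\phi = 1$), and since $A$ is integrally closed in $Q$ the only $A$-order of $Q$ is $A$ itself, which coincides with $\partial(\phi(A))$. Hence $f = \phi_b$ for a unique $b \in A$. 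A degree count using Lemma \ref{lemmulti} and Remark \ref{remdegree} gives
\[
(b)^r = \deg\phi_b = \deg f = \deg\hat{\mu}_2 \cdot \deg\mu_1 = \frn^{r-1} \cdot \frn = \frn^r,
\]
so unique factorization of ideals in $A$ yields $(b) = \frn$, whence $b = \xi a_\frn$ for some $\xi \in A^\times = \bF_q^\times$. Since $\phi$ is an $\bF_q$-algebra map, $\phi_b = \xi\phi_{a_\frn}$, giving
\[
\hat{\mu}_2 \mu_1 = \xi\phi_{a_\frn} = \xi\hat{\mu}_2\mu_2 = \hat{\mu}_2(\xi\mu_2).
\]
Because the skew polynomial ring $\ad{\bar{K}}$ has no zero divisors (compare leading $\tau$-coefficients), one may left-cancel the nonzero $\hat{\mu}_2$ to conclude $\mu_1 = \xi\mu_2$.

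The main obstacle is really only the clean identification $\End{K}{\phi} = \phi(A)$ under the no-CM hypothesis; this is the essential use of that hypothesis, since otherwise $f$ could lie in a strictly larger $A$-order of some $F \supsetneq Q$ and the ideal-theoretic degree comparison would not determine $b$ up to a unit. Everything else is bookkeeping: degree multiplicativity (Lemma \ref{lemmulti}), the formula $\deg\hat{\mu} = (\deg\mu)^{r-1}$ (Remark \ref{remdegree}), and the domain property of the skew polynomial ring.
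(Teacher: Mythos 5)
Your proof is correct, and it takes a genuinely different route from the paper's. The paper's one-line argument writes $\mu_1 = \mu\phi_{a_1}$ and $\mu_2 = \mu\phi_{a_2}$ for a common primitive generator $\mu$ of $\Hom{K}{\phi}{\psi}$, relying on the assertion (made just above the lemma, with a forward reference to Corollary~\ref{cor:primitive} in Section~5) that for non-CM $\phi$ the $A$-module $\Hom{K}{\phi}{\psi}$ is free of rank one and generated by any primitive isogeny; once $\mu_i = \mu\phi_{a_i}$ is available, $\deg\mu_1 = \deg\mu_2$ forces $(a_1)=(a_2)$ and hence $a_1 = \xi a_2$. You instead pull everything back to $\End{K}{\phi}$ via the dual isogeny $\hat\mu_2$, using only the identification $\End{K}{\phi} = \phi(A)$ (which is the literal content of the no-CM hypothesis plus $A$ being integrally closed), a degree count with $\deg\hat\mu_2 = \frn^{r-1}$, and left cancellation in the domain $\ad{\bar K}$. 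This buys you self-containedness: you avoid the unproved-at-that-point claim that a primitive isogeny generates $\Hom{K}{\phi}{\psi}$ as an $A$-module, sidestepping the forward reference to Section~5. The price is a slightly longer argument, but it is arguably the cleaner logical path given where this lemma sits in the paper.

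One small remark: your parenthetical ``(or $\rk\phi = 1$)'' is harmless but unnecessary, since the lemma is applied in the paper only with $\rk\phi = 2$, and in any case $\End{K}{\phi}\cong A$ also holds automatically in rank one.
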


\begin{proof}
By the absence of CM, $\mu_1=\mu\phi_{a_1}$ and $\mu_2=\mu\phi_{a_2}$ hold for some $a_i \in A \ (i=1,2)$, where $\mu$ is a primitive isogeny. 
Then $\deg\phi_{a_1}=\deg\phi_{a_2}$ if and only if $a_2=\xi a_1$ for some $\xi \in \bF_q^\times$.
\end{proof}


\section{Virtually $K$-rational Drinfeld $A$-modules}

Let  $s \in G_K$.
For any  $\mu=\sum_{i=0}^n c_i\tau^i \in \ad{K^{\rm sep}}$, set 
$
{}^s\mu:=\sum_{i=0}^n{}^sc_i\tau^i.
$
Then $\ad{\sepK} \ra \ad{\sepK};\ \mu \mapsto {}^s\mu$ is a 
 ring automorphism of $\ad{\sepK}$.
For a Drinfeld $A$-module $\phi$ over $\sepK$, we define 
a new Drinfeld $A$-module ${}^s\phi$ by 
\[
\begin{array}{cccc}
{}^s\phi: & A & \ra & \ad{K^{\rm sep}}. \\
 & a & \mapsto &  {}^s\phi_a
\end{array}
\]
We call ${}^s\phi$ the {\it conjugate} of $\phi$ by $s$.
Clearly $\rk{}^s\phi=\rk\phi$.
For any  $\sepK$-rational isogeny  $\mu: \phi \ra \psi$, we have 
${}^s\mu {}^s\phi_T^{}={}^s(\mu\phi_T^{})={}^s(\psi_T^{}\mu)={}^s\psi_T^{}{}^s\mu$.
Hence ${}^s\mu$ is an isogeny  
$
{}^s\mu:{}^s\phi \ra {}^s\psi
$.
Then $\mu \mapsto {}^s\mu$ yields  an $A$-module isomorphism $\Hom{\sepK}{\phi}{\psi} \overset{\sim}{\ra} \Hom{\sepK}{{}^s\phi}{{}^s\psi}$.
We have  $\deg\mu=\deg{}^s\mu$ because $s$ induces an isomorphism $\Ker\mu \cong \Ker{}^s\mu$ of $A$-modules.

\begin{prop}
Let $\phi$ and $\psi$ be  Drinfeld $A$-modules over $K$ isogenous to each other.
Suppose that  $\phi$ has no CM over $\sepK$ $($i.e., $\End{\sepK}{\phi}\cong A)$.
Then there are an element $\lambda \in K^\times$  and a positive integer $n$ with $n\mid q-1$  such that any isogeny $\mu:{\phi} \ra {\psi}$ are $K(\sqrt[n]{\lambda})$-rational, where $\sqrt[n]{\lambda}$ is an $n$-th root of $\lambda$.
\end{prop}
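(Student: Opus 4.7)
The plan is to parametrize all $\sepK$-isogenies $\phi \to \psi$ by a single generator, measure the failure of that generator to be $K$-rational by a character of $G_K$ valued in the finite group $\bF_q^\times$, and then invoke Kummer theory to realize the fixed field of its kernel as a radical extension of $K$.

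First I would show that $\Hom{\sepK}{\phi}{\psi}$ is a free $A$-module of rank one. Since $\phi$ has no CM over $\sepK$, we have $\End{\sepK}{\phi} \cong A$; and as $\psi$ is isogenous to $\phi$, also $\End{\sepK}{\psi} \cong A$. Fixing any $\sepK$-isogeny identifies $\Hom{\sepK}{\phi}{\psi} \otimes_A Q$ with $\End{\sepK}{\psi} \otimes_A Q = Q$, so $\Hom{\sepK}{\phi}{\psi}$ is a torsion-free $A$-module of rank one. Because $A = \bF_q[T]$ is a PID, it is in fact free of rank one; fix a generator $\mu_0$. Then every isogeny from $\phi$ to $\psi$ is of the form $\mu_0 \phi_a$ for a unique $a \in A$.

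Next I would define the character. For each $s \in G_K$, the hypothesis that $\phi$ and $\psi$ are defined over $K$ gives ${}^s\phi = \phi$ and ${}^s\psi = \psi$, so ${}^s\mu_0$ is again an isogeny $\phi \to \psi$; write ${}^s\mu_0 = \mu_0 \phi_{a_s}$. Because conjugation preserves degrees, Lemma \ref{lemmulti} yields $\deg\mu_0 = \deg\mu_0 \cdot (a_s)^{\rk\phi}$, forcing $(a_s) = A$ and hence $a_s \in A^\times = \bF_q^\times$. Setting $\chi(s) := a_s$, the cocycle identity ${}^{st}\mu_0 = {}^s({}^t\mu_0)$ combined with $\bF_q^\times \subset K$ (so $G_K$ acts trivially on the values of $\chi$) shows that $\chi : G_K \to \bF_q^\times$ is a homomorphism; continuity follows from the fact that $\ker\chi$ contains $G_M$ for any finite extension $M/K$ over which $\mu_0$ is defined.

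Finally I would apply Kummer theory. Let $n$ be the order of the image of $\chi$, a divisor of $|\bF_q^\times| = q-1$, and let $L \subset \sepK$ be the fixed field of $\ker\chi$, a cyclic extension of $K$ of degree $n$. Since $\gcd(n,p) = 1$ and $K \supset \bF_q$ already contains every $n$-th root of unity, Kummer theory produces $\lambda \in K^\times$ with $L = K(\sqrt[n]{\lambda})$. For $s \in G_L = \ker\chi$ one has ${}^s\mu_0 = \mu_0$, so $\mu_0 \in \ad{L}$; and consequently every isogeny $\mu_0 \phi_a$ is $L$-rational, since $\phi_a \in \ad{K} \subset \ad{L}$. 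The only subtle point is the very first step: the no-CM hypothesis is precisely what pins the values of $\chi$ inside the finite group $A^\times = \bF_q^\times$, which is what ultimately produces the divisibility $n \mid q-1$; everything else is formal.
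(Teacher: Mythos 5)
Your proof is correct and follows essentially the same route as the paper: both reduce to $\Hom{\sepK}{\phi}{\psi}$ being free of rank one over $A$, extract a continuous character $\chi: G_K \to \bF_q^\times$ from the Galois action on a fixed generator (you by a direct degree count, the paper by citing Lemma~\ref{propnoncmdegree}, whose proof is the same count), and take $n = \mathrm{ord}(\chi) \mid q-1$. The only cosmetic difference is the last step: the paper produces $\lambda = c_0^n$ explicitly from the constant coefficient of the generator and checks that the remaining coefficient ratios already lie in $K$, while you invoke Kummer theory abstractly --- the two give the same field.
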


\begin{proof}
It is enough to prove for   a primitive isogeny $\mu: \phi \ra \psi$ in $\Hom{\sepK}{\phi}{\psi}$. 
Set  $\mu=c_0^{} + c_1^{}\tau+ \cdots + c_N^{}\tau^N \in \ad{\sepK}$.
Recall that  $c_0 \neq 0$.
Let $s \in G_K$.
Since  $\deg\mu=\deg{}^s\mu$,  there is a unique $\xi_s \in \bF_q^\times$ such that   ${}^s\mu=\xi_s\mu$ by
Lemma \ref{propnoncmdegree}.
Thus we have ${}^sc_i=\xi_sc_i$ for any $0 \leq i \leq N$.
Consider the group homomorphism
\[
\begin{array}{cccc}
\chi: & G_K & \ra &\bF_q^\times .\\
 & s & \mapsto &  \xi_s
\end{array}
\]
Denote by $n$ the order of $\chi$, so that  $n \mid q-1$.
Then  $\lambda:=c_0^n$ satisfies  ${}^s\lambda=\xi_s^n\lambda=\lambda$ for any $s \in G_K$ and hence $\lambda \in K^\times$.
Set  $c_i':=\frac{c_i}{c_0}$ for each $1\leq i \leq N$.
Then  $c_i' \in K$ since ${}^sc_i'=\frac{\xi_sc_i}{\xi_sc_0}=c_i'$ for any $s \in G_K$.
This implies that   $\mu=c_0\mu'$ for  $\mu'=1 + c_1'\tau+ \cdots + c_N'\tau^N \in \ad{K}$.
Hence $\mu$ is rational over $K(c_0)=K(\sqrt[n]{\lambda})$.   
\end{proof}

We say that a Drinfeld $A$-module $\phi$ is {\it $K$-rational} if it is $\bar{K}$-isomorphic to some $\psi$ defined over $K$, and then $\psi$ is called a {\it $K$-model} of $\phi$.
The following rationality criterion is   an analogue of Weil's classical result \cite{Wei56} for  algebraic varieties.

\begin{thm}\label{thmisom}
For a Drinfeld $A$-module  $\phi$ defined over $\sepK$, the following are equivalent.
\begin{itemize}
\item[{\rm (1)}] $\phi$ is  $K$-rational,
\item[{\rm (2)}]  There exists an isomorphism $\nu_s:{}^s\phi \ra \phi$ for each $s \in G_K$ such that ${}^s\nu_t\cdot\nu_s=\nu_{st}$ for any $s, t \in G_K$.
\end{itemize}
\end{thm}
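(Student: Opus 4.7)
The plan is to prove the two implications in turn, treating the cocycle $\{\nu_s\}$ as a collection of scalars in $\sepK^\times$; recall that any isomorphism of Drinfeld $A$-modules, as an element of $\sepK\{\tau\}$, lies in $\sepK^\times$, so all multiplications below involve only constants (which commute with each other, though \emph{not} with $\tau$).

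For $(1)\Rightarrow(2)$, I would take a $K$-model $\psi$ and a $\sepK$-isomorphism $\iota\colon \psi\to\phi$. Because $\psi$ has coefficients in $K$, we have ${}^s\psi=\psi$ for every $s\in G_K$, and therefore ${}^s\iota$ is an isomorphism $\psi\to{}^s\phi$. Setting $\nu_s := \iota\cdot({}^s\iota)^{-1}$ gives an isomorphism ${}^s\phi\to\phi$, and the cocycle identity falls out directly from the commutativity of scalars:
\[
{}^s\nu_t\cdot\nu_s \;=\; {}^s\iota\cdot({}^{st}\iota)^{-1}\cdot\iota\cdot({}^s\iota)^{-1} \;=\; \iota\cdot({}^{st}\iota)^{-1} \;=\; \nu_{st}.
\]

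For $(2)\Rightarrow(1)$, the core is a Hilbert 90 argument. Writing $\phi_T = T+\sum_{i=1}^r a_i\tau^i$, the relation $\nu_s\cdot{}^s\phi_T=\phi_T\cdot\nu_s$ in $\sepK\{\tau\}$ expands (using $\tau c=c^q\tau$) into the pointwise equations ${}^sa_i = a_i\nu_s^{q^i-1}$ for $1\le i\le r$. The cocycle condition ${}^s\nu_t\cdot\nu_s=\nu_{st}$ then says exactly that $s\mapsto \nu_s^{-1}$ is a $1$-cocycle $G_K\to\sepK^\times$ in the standard sense $\xi_{st}=\xi_s\cdot{}^s\xi_t$. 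Because $\phi$ is defined over a finite extension of $K$ (the one generated by the $a_i$), the map $s\mapsto\nu_s$ is determined modulo a finite quotient of $G_K$, so Hilbert 90 applies in its continuous form and furnishes $c\in\sepK^\times$ with $\nu_s = c/{}^sc$ for every $s\in G_K$. I would then define
\[
\psi_T \;:=\; c^{-1}\phi_T\,c \;=\; T+\sum_{i=1}^r a_i c^{q^i-1}\tau^i,
\]
so that $c\colon\psi\to\phi$ is a tautological $\sepK$-isomorphism. The verification that $\psi$ descends to $K$ is a direct calculation: for each coefficient $b_i := a_ic^{q^i-1}$,
\[
{}^sb_i \;=\; a_i\nu_s^{q^i-1}({}^sc)^{q^i-1} \;=\; a_i(\nu_s\cdot{}^sc)^{q^i-1} \;=\; a_ic^{q^i-1} \;=\; b_i.
\]

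The main obstacle, and really the only subtle point, is matching the cocycle convention in the statement with the standard form needed to apply Hilbert 90: one must pass between $\nu_s$ and $\nu_s^{-1}$ (or equivalently between $c/{}^sc$ and ${}^sc/c$) to land in the coboundary description, and one must confirm continuity of the cocycle so that $H^1(G_K,\sepK^\times)=0$ can be invoked. Everything else is a bookkeeping verification inside $\sepK\{\tau\}$.
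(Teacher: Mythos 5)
Your proof is correct and takes essentially the same route as the paper: the $(1)\Rightarrow(2)$ cocycle is obtained as the boundary of an isomorphism to a $K$-model, and for $(2)\Rightarrow(1)$ Hilbert's theorem~90 (applied, as the paper also does, after reducing to a cocycle factoring through a finite quotient of $G_K$) produces the scalar $c$, and then $\psi_T := c^{-1}\phi_T c$ descends to $K$. Two cosmetic remarks: since $\sepK^\times$ is abelian, the family $\{\nu_s\}$ is already a standard $1$-cocycle, so the passage to $\nu_s^{-1}$ is unnecessary; and the paper verifies ${}^s\psi_T=\psi_T$ by a single identity in $\sepK\{\tau\}$ rather than coefficient-by-coefficient, but the content is the same.
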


\begin{proof}
We first prove (1)$\Rightarrow$(2).
Let $\psi$ be a $K$-model of $\phi$ and 
take an isomorphism $\nu:\phi \ra \psi$, so that $\psi_T^{}=\nu^{}\phi_T^{}\nu^{-1}$. 
Since ${}^s\psi=\psi$  for any $s \in G_K$, ${}^s\nu$  gives raise to  an isomorphism ${}^s\nu:{}^s\phi \ra  \psi$.
For each $s\in G_K$, define an isomorphism $\nu_s:{}^s\phi \ra  \phi$ by $\nu_s:=\nu^{-1}\cdot{}^s\nu$.
Then 
$
{}^s\nu_t\cdot\nu_s=\nu_{st} 
$ for any $s,t \in G_K$.

To prove (2)$\Rightarrow$(1), take a family
 $\{\nu_s:{}^s\phi \ra \phi\}_{s\in G_K}$ of isomorphisms as in (2).
Since $\phi$ is actually defined over a finite extension $L/K$, we may assume that 
$\nu_{s}=\nu_{t}$ if $s|_L=t|_L$.
 Then the  map
$\alpha:G_K \ra  {\sepK}^{,\times};\ s \mapsto \nu_s$ is continuous with respect to 
the Krull topology on $G_K$ and the discrete topology on  ${\sepK}^{,\times}$.
Since $\alpha$ satisfies the one-cocycle condition, 
Hilbert's theorem 90 implies that 
there is an element $\nu  \in {\sepK}^{,\times}$ such that $\nu_s=\nu^{-1}\cdot{}^s\nu$ for any $s\in G_K$. 
Let $\psi$ be the   Drinfeld $A$-module determined  by $\psi_T^{}=\nu \phi_T^{} \nu^{-1}$.
Then for any $s \in G_K$, we have
\[
{}^s\psi_T^{}={}^s\nu{}^s\phi_T^{} {}^s\nu^{-1}=    
\nu  \nu_s \cdot \nu_s^{-1}\phi_T^{}\nu_s \cdot \nu_s^{-1}  \nu^{-1}      
=\psi_T^{}.
\]
 Hence  $\psi$ is   a  $K$-model   of $\phi$.
\end{proof}

As  a function field analogue of  $\bQ$-curves, we consider the following weak rationality of Drinfeld $A$-modules.

\begin{definition}\label{defKvirtual}
A  Drinfeld $A$-module $\phi$ is said to be  {\it virtually $K$-rational} if 
it  is defined over $\sepK$ and isogenous to  ${}^s\phi$ for any $s \in G_K$.
\end{definition}

Let $\phi$ be defined over $\sepK$.
By Theorem \ref{thmisom}, if  $\phi$ is $K$-rational, then it  is virtually $K$-rational.
If $\rk\phi=1$,   then $\phi$ is virtually $K$-rational  since all rank-one Drinfeld $A$-modules are isomorphic  to  each other over $\bar{K}$.
For this reason,  our interest focuses on the non-$K$-rational and  $\rk \phi \geq 2$ case.
The following gives a non-trivial example of virtually $Q$-rational Drinfeld $A$-modules.

\begin{ex}\label{exKvirtual}
Suppose that $p \neq 2$ and fix a square root $\sqrt{T+1} \in Q^{\rm sep}$ of $T+1 \in A$. 
Define two elements of $\ad{Q^{\rm sep}}$ by 
$\mu:=\sqrt{T+1}+1-\tau$ and $\eta:=\sqrt{T+1}-1+\tau$. 
Then  
\begin{eqnarray}\nonumber
\mu\eta &=& (\sqrt{T+1}+1-\tau)(\sqrt{T+1}-1+\tau) \\ \nonumber 
 &=& T + (2+\sqrt{T+1}-\sqrt{T+1}^q_{})\tau-\tau^2. \nonumber
\end{eqnarray}
Let  $\varphi$ be  the  Drinfeld $A$-module determined by  $\varphi_T^{}=\mu\eta$, so that 
$\rk \varphi=2$.
Then  it is not $Q$-rational but virtually $Q$-rational as follows.
If $s\in G_Q$ fixes $\sqrt{T+1}$, then  ${}^s\varphi=\varphi$. 
If  ${}^s\sqrt{T+1}=-\sqrt{T+1}$,
then
\begin{eqnarray}\nonumber
{}^s\varphi_T^{}&=&{}^s\mu{}^s\eta \\ \nonumber
&=& (-\sqrt{T+1}+1-\tau)(-\sqrt{T+1}-1+\tau) \\ \nonumber 
 &=& (\sqrt{T+1}-1+\tau)(\sqrt{T+1}+1-\tau) \\ \nonumber
&=&\eta\mu. \nonumber
\end{eqnarray}
Thus ${}^s\varphi$ is isogenous to $\varphi$ because  $\mu{}^s\varphi_T^{}=\mu\eta\mu=\varphi_T^{}\mu$.
Hence $\varphi$ is virtually $Q$-rational. 
Now the $j$-invariant  
\[
j_\varphi=- (2+\sqrt{T+1}-\sqrt{T+1}^q_{})^{q+1}
\]
of $\varphi$ is  not contained in $Q$.
 Hence $\varphi$ is not $Q$-rational   by the next remark.
\end{ex}

\begin{rem}
For a rank-two Drinfeld $A$-module determined by $\phi^{}_T=T+g\tau+\Delta\tau^2 \in \ad{\bar{K}}$, its {\it $j$-invariant} is defined by 
\[
j_\phi=\frac{g^{q+1}}{\Delta}.
\]
It follows that $\phi$ is $\bar{K}$-isomorphic to some $\psi$ if and only if $j_\phi=j_\psi$.
In particular, $\phi$ is $Q(j_\phi)$-rational   because the Drinfeld $A$-module $\phi'$ determined by $\phi'_T=T+j_\phi\tau+j_\phi^q\tau^2$ has the $j$-invariant  $j_\phi$.
Hence $\phi$ is $K$-rational  if and only if $j_\phi \in K$.
\end{rem}

In the full CM case, the explicit class field theory (cf.\ \cite{Hay79} and  \cite{Hay92}) implies  the following.

\begin{prop}\label{propCMKvirtual}
Let $\phi$ be a Drinfeld $A$-module over $\bar{K}$  with
full CM by a finite extension $F/Q$.
Then $\phi$ is isogenous to a  virtually $F$-rational Drinfeld $A$-module.
\end{prop}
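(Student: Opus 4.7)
The plan is to reduce to the rank-one case and invoke Hayes' explicit class field theory for function fields. First I would reduce to the situation where $\partial(\End{\bar K}{\phi})$ equals the maximal order $\mathcal{O}_F$: if the geometric endomorphism order $\mathcal{O}$ has nontrivial conductor $\mathfrak{f}$ in $\mathcal{O}_F$, a standard construction (the function-field analogue of the CM-elliptic-curve case, performed via lattice tensor products, or equivalently by dividing out an appropriate $\mathfrak{f}$-torsion subgroup) yields a Drinfeld $A$-module isogenous to $\phi$ whose geometric endomorphism ring equals $\mathcal{O}_F$. Since virtual $F$-rationality is an isogeny-invariant notion, we may assume this from the outset.

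With maximality in force, the remark following the CM definition tells us that the inclusion $\End{\bar K}{\phi}\hookrightarrow \bar K\{\tau\}$ determines a rank-one Drinfeld $\mathcal{O}_F$-module $\Phi:\mathcal{O}_F \to \bar K\{\tau\}$ with $\Phi|_A = \phi$. By Hayes' theory \cite{Hay79, Hay92}, every rank-one Drinfeld $\mathcal{O}_F$-module is $\bar K$-isomorphic to a sign-normalized one, and every sign-normalized rank-one $\mathcal{O}_F$-module is defined over the normalizing field $H^+$ of $F$, which is a finite abelian (hence separable) extension of $F$. Replacing $\Phi$ by an $\bar K$-isomorphic model defined over $H^+$ and setting $\psi := \Phi|_A$, we obtain a Drinfeld $A$-module isogenous to $\phi$ and defined over $F^{\rm sep}$.

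To verify that $\psi$ is virtually $F$-rational, fix $s \in G_F$. Because $\mathcal{O}_F \subset F$ is pointwise fixed by $s$, the conjugate ${}^s\Phi$ is again a rank-one Drinfeld $\mathcal{O}_F$-module over $F^{\rm sep}$. The essential input from Hayes' theory is that the set of $\bar K$-isomorphism classes of rank-one Drinfeld $\mathcal{O}_F$-modules forms a single isogeny class, on which $\mathrm{Pic}(\mathcal{O}_F)$ acts simply transitively by ideal-theoretic isogenies. Consequently there exists an $\mathcal{O}_F$-isogeny $\mu_s : \Phi \to {}^s\Phi$, and its restriction to $A$ is an $A$-isogeny $\psi \to {}^s\psi$, which is what is needed.

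The principal obstacle is the descent step of the second paragraph, namely invoking Hayes' theorem that a rank-one Drinfeld $\mathcal{O}_F$-module admits a model over $F^{\rm sep}$ (in fact over the narrow Hilbert class field). Once this is granted, the reduction to the maximal order at the start and the production of Galois-conjugate isogenies at the end are essentially formal consequences of the rank-one Picard-torsor formalism.
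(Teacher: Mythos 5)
Your proposal is correct and follows the same overall strategy as the paper: reduce to the case $\End{\bar{K}}{\phi}\cong\cO_F$ by an isogeny (the paper cites \cite[Proposition 4.7.19]{Goss} for this), pass to the associated rank-one Drinfeld $\cO_F$-module $\Phi$, descend it over a finite abelian extension of $F$ via Hayes' theory, and then produce isogenies between Galois conjugates. The one genuine variation is in the final step. The paper invokes the precise form of Hayes' main theorem (\cite[Theorem 10.8]{Hay92}): writing $s|_{H_F}=\mathrm{Frob}_\frP$, the conjugate ${}^s\Psi$ is isomorphic to $\frP*\Psi$, which is isogenous to $\Psi$. You instead use the coarser observation that ${}^s\Phi$ is again a rank-one Drinfeld $\cO_F$-module over $F^{\rm sep}$ and that all rank-one Drinfeld $\cO_F$-modules over $\bar{K}$ lie in a single isogeny class (a $\mathrm{Pic}(\cO_F)$-torsor under ideal isogenies), so an isogeny $\Phi\to{}^s\Phi$ exists automatically. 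This is slightly more economical, since it does not require pinning down exactly which isogeny $s$ induces, only that one exists; the paper's sharper version has the side benefit of showing the isogeny degrees are governed by $\mathrm{Pic}(\cO_F)$. A cosmetic difference: you descend over the normalizing field $H^+$ rather than the Hilbert class field $H_F$; both are finite abelian over $F$, so either suffices for the needed rationality over $F^{\rm sep}$.
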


\begin{proof}
Let $\cO_F$ be the ring of integers of $F$.
By replacing $\phi$ with a suitable  isogenous one if necessarily, we may assume that 
$\End{\bar{K}}{\phi} \cong \cO_F$ by  \cite[Proposition 4.7.19]{Goss}.
Then we obtain a Drinfeld $\cO_F$-module $\Phi:\cO_F \ra \ad{\bar{K}}$ of rank one with
$\Phi|_A=\phi$. 
Recall that there is a unique place $\infty_F$ of $F$ lying above $\infty=(1/T)$.
Let $H_F$ be the Hilbert class field of $F$.
Namely it is the maximal unramified abelian extension of $F$ in which $\infty_F$ splits completely.
Then it is known that   $\Phi$  is $H_F$-rational. 
Let $\Psi$ be an  $H_F$-model of $\Phi$. 
Then the restriction  $\psi:=\Psi|_{A}$  is   an $H_F$-model of  $\phi$. 
Let $s \in G_F$.
Then we have  
$s|_{H_F}=\mathrm{Frob_{\frP}}$ for some Frobenius automorphism $\mathrm{Frob}_{\frP} \in \Gal{H_F}{F}$
at   $\frP \subset \cO_F$, so that  ${}^s\Psi = {}^{\mathrm{Frob_\frP}}\Psi$.
By  \cite[Theorem 10.8]{Hay92},  the conjugate  ${}^{\mathrm{Frob_\frP}}\Psi$ is isomorphic to
the Drinfeld $\cO_F$-module 
 $\frP{*}\Psi$ given by the action of ideas on $\Psi$, which is isogenous to $\Psi$; see 
\cite[pp.7]{Hay92}.
Hence we have an isogeny ${}^s\Psi \ra \Psi$ and it yields an isogeny ${}^s\psi \ra \psi$.
\end{proof}


\section{The modular curve $Y_*(\frn)$}

Let  $\frn \subset A$ be  a non-zero ideal. From now on, we assume that  any Drinfeld $A$-module is of rank two.
This section is devoted to a study of rational points of  the Drinfeld modular curve $Y_0(\frn)$ of $\Gamma_0(\frn)$-level and its quotient $Y_*(\frn)$.
Similar to  elliptic modular curves,  we see that $Y_0(\frn)$ is affine smooth over $Q$ and  has a unique smooth compactification  $X_0(\frn)$.
See \cite{Gek86}, \cite{GR96}, \cite{Gek01}, and  \cite{Sch} for more details.

Recall that 
$Y_0(\frn)$ is a coarse moduli variety of (rank-two) Drinfeld $A$-modules with additional structures.
More preciously,  
every  $K$-rational point  $x \in Y_0(\frn)(K)$ corresponds to  a $\bar{K}$-equivalence class
of a $K$-rational cyclic $\frn$-isogeny. 
Equivalently, every $x \in Y_0(\frn)(K)$ corresponds to 
a $\bar{K}$-isomorphism class $[\phi,\Lambda]$ of a pair $(\phi, \Lambda)$ consisting of   a Drinfeld $A$-module  $\phi$ over $K$   and  a   $K$-rational cyclic $\frn$-kernel $\Lambda$  of $\phi$.
Here two such pairs $(\phi,\Lambda)$ and $(\phi',\Lambda')$ are said to be {\it  $\bar{K}$-isomorphic} if there exists an $\bar{K}$-isomorphism $\nu:\phi \ra \phi'$  such that $\nu \Lambda=\Lambda'$.
If $x \in Y_0(\frn)(K)$ is represented by a $K$-rational cyclic $\frn$-isogeny $\mu:\phi \ra \psi$ with $\Ker \mu=\Lambda$, then  we  use the notation $x=[\phi, \Lambda]$ or $x=[\mu:\phi \ra \psi]$.
We say that $x$ is a {\it CM point} if $\phi$ has CM.

\begin{rem}
Let $\ell$ be a prime number.
Then Mazur's famous result \cite[Theorem 7.1]{Maz78} asserts that the elliptic modular curve $Y_0(\ell)$ has 
no $\bQ$-rational points if $\ell > 163$.
As a partial  analogue of this, the following is known:
\begin{thm}[{\cite[Theorem 1.2]{Pal10}}]
Let $\frp \subset A$ be  a non-zero  prime ideal and  $a_\frp \in A$ a generator of $\frp$. 
Assume that $Q=\bF_2(T)$.
Then $Y_0(\frp)$ has no $Q$-rational points if $\deg(a_\frp) \geq 3$.
\end{thm}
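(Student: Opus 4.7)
My plan is to adapt Mazur's strategy for proving $Y_0(\ell)(\bQ) = \emptyset$ (for large primes $\ell$) to the Drinfeld modular setting over $Q = \bF_2(T)$. Suppose for contradiction that $\deg a_\frp \geq 3$ and that there exists a non-cuspidal point $x \in Y_0(\frp)(Q)$. By the moduli interpretation recalled above, $x$ corresponds to a $\bar{Q}$-equivalence class of a $Q$-rational cyclic $\frp$-isogeny $\mu : \phi \ra \psi$ of rank-two Drinfeld $A$-modules over $Q$. One should also argue at the outset that $x$ is not a CM point: this can be ruled out using Hayes' explicit class field theory, as in the proof of Proposition \ref{propCMKvirtual}, because the class numbers of imaginary $A$-orders impose arithmetic constraints on $j(\phi) \in Q$ incompatible with a $Q$-rational cyclic $\frp$-isogeny for large $\deg a_\frp$.

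The first non-trivial step would be to analyse the cuspidal subgroup of the Jacobian $J_0(\frp)$. The compactification $X_0(\frp)$ has exactly two cusps $0$ and $\infty$, both $Q$-rational, and the class $c = [0] - [\infty] \in J_0(\frp)(Q)$ generates a finite cyclic \emph{Eisenstein} subgroup $\cC$ whose order grows with $\deg a_\frp$ (the Drinfeld analogue of Manin--Drinfeld, due in this setting to Gekeler and Pal). I would then form the Eisenstein quotient $\tilde{J} := J_0(\frp)/\frI J_0(\frp)$, where $\frI \subset \bT$ is the Eisenstein ideal in the Hecke algebra acting on $J_0(\frp)$.

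Next I would consider the Abel--Jacobi embedding $\iota : X_0(\frp) \hra J_0(\frp)$ sending $y \mapsto [y] - [\infty]$, composed with the projection $J_0(\frp) \thra \tilde{J}$. The argument requires two ingredients: (a) $\tilde{J}(Q)$ is finite, and in fact controlled by $\cC$, which one obtains from Pal's work on the BSD conjecture for the winding quotient of $J_0(\frp)$ together with Lang--N\'eron finiteness; and (b) the composed map $X_0(\frp) \ra \tilde{J}$ is a \emph{formal immersion} at the cusp $\infty$ after base change to a suitable residue field of $A$ at an auxiliary prime of good reduction. Combining (a) with the fact that the image $\iota(x) - \iota(\infty)$ lies in an Eisenstein eigenspace shows that $x$ and $\infty$ have the same image in $\tilde{J}(Q)$; then (b) forces $x = \infty$ in $X_0(\frp)(Q)$, contradicting the assumption that $x \in Y_0(\frp)(Q)$ is non-cuspidal.

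The main obstacle will be the formal immersion step. Its verification reduces to showing that the cotangent space of $\tilde{J}$ at the identity, identified with an Eisenstein quotient of the space of Drinfeld cusp forms of weight $2$ and level $\frp$, contains a form whose $q$-expansion at $\infty$ has non-zero leading coefficient. In characteristic $2$ the standard Hecke-theoretic decomposition of the cotangent space and the action of the Atkin--Lehner involution $w_\frp$ have to be handled with care because of wild ramification phenomena absent over $\bF_q$ with $q$ odd. The degree condition $\deg a_\frp \geq 3$ enters precisely here: only then is the Eisenstein cuspidal subgroup $\cC$ large enough, and the corresponding piece of the cotangent space non-zero, for the formal immersion criterion to separate a hypothetical non-cuspidal $Q$-point from the cusps.
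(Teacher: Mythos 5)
The paper does not prove this theorem; it is quoted verbatim as a known result of P\'al \cite[Theorem 1.2]{Pal10} inside a remark, purely to exhibit a Drinfeld-side analogue of Mazur's theorem on $Y_0(\ell)(\bQ)$. There is therefore no proof in the present paper to compare against, and the reader is expected to look up the original reference.

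As for your sketch: you correctly identify the Mazur-style strategy (cuspidal divisor class, Eisenstein ideal and Eisenstein quotient, formal immersion at the cusp, finiteness of the Mordell--Weil group of the quotient) that P\'al adapts in \cite{Pal10}, and the high-level outline is oriented in the right direction. But this is a plan, not a proof. Every load-bearing step is deferred: (a) the finiteness of $\tilde{J}(Q)$ is asserted to follow from ``P\'al's work on BSD for the winding quotient'' plus Lang--N\'eron, with no argument; (b) the formal immersion criterion at $\infty$ over an auxiliary residue field is stated as ``the main obstacle'' and then not carried out, and it is precisely here that the hypothesis $\deg a_\frp \geq 3$ must enter quantitatively, which your sketch does not make precise; (c) the exclusion of CM points via ``class numbers of imaginary $A$-orders impose arithmetic constraints'' is hand-waved and is not obviously how this is handled in characteristic $2$, where the relevant CM fields can be inseparable over $Q$ (cf.\ Remark \ref{remfixedpt}). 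If you intend this as a proof you would need to supply the actual cotangent-space computation, the exact source and statement of the analytic input over function fields, and a correct treatment of CM and of the $q=2$ pathologies; as it stands the proposal reproduces the shape of the known argument without the content.
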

\end{rem}

 Next we review some properties of Atkin-Lehner involutions; see \cite{Sch} for detail.
Let $\frm \subset A$ be a non-zero ideal with   $\frm \mid \frn$ and  $(\frm, \frac{\frn}{\frm})=1$.
Then  there is  an involution 
\[
w_\frm:Y_0(\frn) \overset{\sim}{\lra} Y_0(\frn)
\]
defined over $Q$, so-called  the   ({\it partial}) {\it  Atkin-Lehner involution} with respect to $\frm$. 
If $\frm=\frn$, then  $w_\frn$ is sometimes called the {\it full Atkin-Lehner involution}.
Denote by   $\cW(\frn)$ the group consisting  of all Atkin-Lehner involutions.
Since  
\[
w_{\frm_1}w_{\frm_2}=w_{\frm_2}w_{\frm_1}=w_{\frm_3} \ \mbox{for}\  \frm_3=\frac{\frm_1\frm_2}{(\frm_1,\frm_2)^2},
\]
 we have  $\cW(\frn) \cong (\bZ{/}2\bZ)^n$, where $n$ is the number of distinct prime factors of $\frn$.

Let  $w_\frm \in \cW(\frn)$ and let $x=[\mu:\phi \ra \psi] \in  Y_0(\frn)(K)$ be represented by  a $K$-rational $\mu$.
Then
 the moduli interpretation of $w_\frm x$  is  as follows.
If  $\frn=\frm\frn'$ with  $(\frm, \frn')=1$,
then $\Ker \mu =\Lambda_{\frm} \oplus \Lambda_{\frn'}$ with  
 $\Lambda_\frm \cong A{/}\frm$ and $\Lambda_{\frn'} \cong A{/}\frn'$. 
Hence $\mu$ decomposes as $\mu=\mu_{\frn'}\mu_\frm$, where $\mu_\frm:\phi \ra \phi_\frm$ is a $K$-rational cyclic $\frm$-isogeny with $\Ker \mu_\frm=\Lambda_\frm$ and $\mu_{\frn'}:\phi_\frm \ra \psi$ is a $K$-rational cyclic $\frn'$-isogeny with $\Ker\mu_{\frn'}=\mu_\frm(\Ker\mu)=\mu_\frm(\Lambda_{\frn'})$.
Then 
\[ 
w_\frm x= 
\left[\phi_\frm, \ \mu_\frm(\phi[\frm] \oplus \Lambda_{\frn'})\right]=[\eta:\phi_\frm \ra \psi_\frm],
\]
where $\eta:\phi_\frm \ra \psi_\frm$ is a $K$-rational cyclic $\frn$-isogeny with $\Ker\eta=\mu_\frm(\phi[\frm] \oplus \Lambda_{\frn'})$.
Hence we have  $w_\frm x \in Y_0(\frn)(K)$.
Let us  decompose  $\eta$ as $\eta=\eta_{\frn'}\eta_\frm$ with $\eta_\frm:\phi_\frm \ra \phi'$ and $\eta_{\frn'}:\phi' \ra \psi_\frm$ similarly as $\mu=\mu_{\frn'}\mu_\frm$. 
Then $\Ker\eta_\frm\mu_\frm=\phi[\frm]$ by construction.
This  implies  that  $\lambda\eta_\frm\mu_\frm=\phi_{a_\frm}$ for some $\lambda \in K^\times$, where $a_\frm$ is the monic generator of $\frm$.
\begin{lem}
The  $\lambda$ gives raise to  an isomorphism $\lambda:\phi' \ra \phi$ satisfying $\lambda^{}\eta_\frm=\hat{\mu}_\frm$.
\end{lem}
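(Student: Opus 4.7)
The plan is to extract both assertions from the given relation $\lambda\eta_\frm\mu_\frm=\phi_{a_\frm}$ by using the intertwining identities that define the three isogenies in play, together with the fact that the skew polynomial ring $\bar{K}\{\tau\}$ is an integral domain, so that right cancellation by nonzero elements is legitimate.

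First I would verify that $\lambda\colon\phi'\to\phi$ satisfies the intertwining relation $\lambda\phi'_T=\phi_T\lambda$; its $K$-rationality is then automatic since $\lambda\in K^\times$ by hypothesis. Multiplying $\lambda\eta_\frm\mu_\frm=\phi_{a_\frm}$ on the right by $\phi_T$, the right-hand side becomes $\phi_{a_\frm}\phi_T=\phi_T\phi_{a_\frm}=\phi_T\lambda\eta_\frm\mu_\frm$ by commutativity in $\End{K}{\phi}$. The left-hand side, rewritten by pushing $\phi_T$ through first $\mu_\frm\phi_T=(\phi_\frm)_T\mu_\frm$ and then $\eta_\frm(\phi_\frm)_T=\phi'_T\eta_\frm$, equals $\lambda\phi'_T\eta_\frm\mu_\frm$. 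Thus $(\lambda\phi'_T-\phi_T\lambda)\eta_\frm\mu_\frm=0$, and since $\eta_\frm\mu_\frm\neq 0$ in the domain $\ad{\bar{K}}$, right cancellation yields $\lambda\phi'_T=\phi_T\lambda$, so $\lambda$ is indeed a $K$-rational isomorphism $\phi'\to\phi$.

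Second, to establish $\lambda\eta_\frm=\hat{\mu}_\frm$, I would combine the hypothesis with the defining property $\hat{\mu}_\frm\mu_\frm=\phi_{a_\frm}$ supplied by Proposition \ref{propdual}. Together these give $\lambda\eta_\frm\mu_\frm=\hat{\mu}_\frm\mu_\frm$, and right cancellation by the nonzero $\mu_\frm$ forces $\lambda\eta_\frm=\hat{\mu}_\frm$ inside $\ad{\bar{K}}$.

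There is no serious obstacle here; the only point worth flagging is the repeated use of right cancellation in $\bar{K}\{\tau\}$, which rests on the standard fact that a twisted polynomial ring over a field is a (non-commutative) integral domain. Conceptually, the lemma is a bookkeeping statement identifying the isomorphism $\lambda$ implicit in the Atkin--Lehner moduli description of $w_\frm x$ with the dual-isogeny construction from Section 2.
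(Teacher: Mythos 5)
Your proof is correct and follows essentially the same route as the paper: the verification that $\lambda\phi'_T=\phi_T\lambda$ is the same computation (push $\phi_T$ through $\eta_\frm\mu_\frm$ using the intertwining relations, then right-cancel in $\ad{\sepK}$ via the right division algorithm), merely phrased without the paper's explicit insertion of $\lambda^{-1}$. Your cancellation argument for $\lambda\eta_\frm=\hat\mu_\frm$ is simply an unfolding of the uniqueness clause of Proposition \ref{propdual}, which the paper cites directly.
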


\begin{proof}
Since $\phi'_T\eta_\frm\mu_\frm=\eta_\frm\mu_\frm\phi_T$ holds, the equation $\lambda\eta_\frm\mu_\frm=\phi_{a_\frm}$ implies
\[
\lambda\phi'_T\lambda^{-1}\phi_{a_\frm}=
\lambda\phi'_T \eta_\frm\mu_\frm=
\lambda\eta_\frm\mu_\frm\phi_T^{} =
\phi_{a_\frm}\phi_T^{}=
\phi_T^{}\phi_{a_\frm}.
\]
Hence $\lambda\phi'_T\lambda^{-1}=\phi_T^{}$ by the right division algorithm and so  $\lambda$ is an isomorphism.
The equality  $\lambda^{}\eta_\frm = \hat{\mu}_\frm$ follows from the uniqueness of the dual isogeny; see Proposition \ref{propdual}.
\end{proof}

Thus we obtain the following commutative diagram

\begin{equation}\label{diagram}
\xymatrix{
\phi \ar[rr]^-\mu \ar[rd]_-{\mu_\frm}& & \psi \\
 & \phi_\frm \ar[rr]^-\eta \ar[rd]_-{\hat{\mu}_\frm} \ar[ru]^{\mu_{\frn'}}& & \psi_\frm \\
 & & \phi \ar[ru]_-{\eta_{\frn'}\lambda^{-1}}& \hspace{20pt}.
}
\end{equation}
If in particular $\frm=\frn$, then   by construction $\phi_\frm=\psi$.
Hence $
w_\frn x
$ 
 is represented by the dual  isogeny $\hat{\mu}:\psi \ra \phi$.

\begin{rem}\label{remfixedpt}
For $x \in Y_0(\frn)(\bar{K})$, consider the {\it decomposition group} 
\[
D_x:=\{w \in \cW(\frn); wx=x\}
\]
 of $x$.
It is known that  the number of points in $Y_0(\frn)(\bar{K})$ fixed by some $w \in \cW(\frn)$ is finite and so the group $D_x$ is trivial for almost all $x$.
In particular, $D_x$ is trivial if $x$ is a non-CM point since any point fixed by a non-trivial involution in $\cW(\frn)$ is a CM point. 
For more details, let $w_\frm \in \cW(\frn)$ be a non-trivial involution and denote by $a_\frm \in \frm$ the monic generator.
Let 
$x=[\phi, \Lambda] \in Y_0(\frn)(\bar{K})$ and suppose that $w_\frm x=x$.
If $q$ is odd, then $\phi$ has CM by $Q(\sqrt{\xi a_\frm})$ for some $\xi \in \bF_q^\times$; see \cite{Gek86} or \cite{Sch}.
This implies that $D_x=\{\id, w_\frm \}$.
On the other hand, if $q$ is even, then 
 $\phi$ has CM by the inseparable extension  $Q(\sqrt{T})$; see \cite[pp. 338]{Sch} for example. 
Hence  $D_x$ may become larger.
\end{rem}

Denote by $Y_0(1)$ the Drinfeld modular curve for the ideal $(1)=A$.
Then any $x \in Y_0(1)(K)$ corresponds to a $\bar{K}$-isomorphism class $[\phi]$ of a Drinfeld $A$-module $\phi$ over $K$.
 Let  $\theta:Y_0(\frn) \ra Y_0(1)$ be the morphism given by forgetting the  level structure.
Then we  have  $\theta(x)=[\phi]$ for  $x=[\mu:\phi \ra \psi] \in Y_0(\frn)(K)$.
 Define 
$
\cN_{0}(\frn)(\sepK) \subset Y_0(\frn)(\sepK)
$ 
to be the subset consisting of  all non-CM $\sepK$-rational  points of $Y_0(\frn)$. 
Consider the map 
\[
\Theta: \cN_{0}(\frn)(\sepK)  \ra   Y_0(1)(\sepK) \times Y_0(1)(\sepK) 
\]
defined by  $\Theta( x  ) = \left(\theta (x) ,  \theta(w_\frn x)\right)$. 
The following  lemma is needed in Section 5.

\begin{lem}\label{lemmodular}
The map $ 
\Theta
$
 is injective. 
\end{lem}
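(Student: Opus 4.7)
The plan is to unwind the moduli interpretations and reduce to an application of Lemma \ref{propnoncmdegree}. Suppose $x, x' \in \cN_0(\frn)(\sepK)$ satisfy $\Theta(x)=\Theta(x')$, and fix representatives $x=[\mu:\phi\to\psi]$ and $x'=[\mu':\phi'\to\psi']$ with $\mu,\mu'$ cyclic $\frn$-isogenies defined over $\sepK$. As noted just after diagram \eqref{diagram}, $w_\frn x$ is represented by $\hat\mu:\psi\to\phi$ and similarly $w_\frn x'$ by $\hat{\mu'}:\psi'\to\phi'$, so the assumption $\Theta(x)=\Theta(x')$ translates, via the moduli description of $Y_0(1)$, into the existence of $\bar K$-isomorphisms $\nu:\phi\to\phi'$ and $\lambda:\psi\to\psi'$.

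Next I form the composite $\mu'':=\lambda\mu\nu^{-1}:\phi'\to\psi'$. Because $\nu$ and $\lambda$ are isomorphisms, $\Ker\mu''=\nu(\Ker\mu)$ is cyclic with annihilator $\frn$, so $\mu''$ is itself a cyclic $\frn$-isogeny and hence primitive, by the cyclic-iff-primitive equivalence recorded after Definition \ref{defprimitive} (which applies since $\phi'$ has no CM). Thus $\mu'$ and $\mu''$ are two primitive $\frn$-isogenies in $\Hom{\sepK}{\phi'}{\psi'}$, both of degree $\frn$.

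Applying Lemma \ref{propnoncmdegree} to the non-CM Drinfeld $A$-module $\phi'$ then yields $\mu'=\xi\mu''=(\xi\lambda)\mu\nu^{-1}$ for some $\xi\in\bF_q^\times$. Since $\xi\lambda:\psi\to\psi'$ is still an isomorphism, this exhibits $\mu$ and $\mu'$ as $\bar K$-equivalent cyclic $\frn$-isogenies, forcing $x=x'$ in $Y_0(\frn)(\sepK)$. The only mild subtlety, which I do not expect to be a genuine obstacle, is that Lemma \ref{propnoncmdegree} is stated over $K$; but its proof uses only the existence of a primitive isogeny $\phi'\to\psi'$ over $\sepK$ together with $\End{\sepK}{\phi'}=A$, both of which remain valid in the $\sepK$-rational setting, so the argument carries over unchanged.
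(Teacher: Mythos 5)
Your proof is correct and follows essentially the same route as the paper's: extract the isomorphisms $\nu$ and $\lambda$ from $\Theta(x)=\Theta(x')$, form $\lambda\mu\nu^{-1}$, compare degrees, and invoke Lemma \ref{propnoncmdegree} to conclude $\bar K$-equivalence. The side remark about Lemma \ref{propnoncmdegree} being stated over $K$ is a reasonable observation, but since that lemma holds verbatim for any field containing $Q$ (as the paper itself tacitly uses), nothing further is needed.
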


\begin{proof}
Take two points $x, y \in \cN_0(\frn)(\sepK)$ with $x=[\mu:\phi \ra \psi]$ and $y=[\eta:\phi' \ra \psi']$, where 
both $\mu$ and $\eta$ are $\sepK$-rational.
Assume that  $\Theta(x)=\Theta(y)$.
Since $w_\frn x$ and $w_\frn y$ are represented by $\hat{\mu}$ and $\hat{\eta}$ respectively, we have
$[\phi]=[\phi']$ and $[\psi]=[\psi']$.
 Thus we can take $\sepK$-isomorphisms
 $\nu:\phi \ra \phi'$ and $\lambda:\psi \ra \psi'$.
Then $\eta':=\lambda\mu\nu^{-1}$ yields a $\sepK$-rational cyclic $\frn$-isogeny $\eta':\phi' \ra \psi'$.
Since $\deg\eta=\deg\eta'$, 
Lemma \ref{propnoncmdegree} implies that  $\eta=\xi\eta'$ for some $\xi \in \bF_q^\times$. 
Hence  $\eta=(\xi\lambda)\mu\nu^{-1}$ and so $\mu$ and $\eta$ are $\bar{K}$-equivalent.    
Thus $x=y$.
\end{proof}

Let
$Y_*(\frn):=Y_0(\frn)/\cW(\frn)$ be the quotient of $Y_0(\frn)$ by all Atkin-Lehner involutions.
Then it is an affine curve over $Q$.
 Denote 
 by $\gamma:Y_0(\frn) \ra Y_*(\frn)$ the quotient  morphism, which is defined over $Q$.
Since $Y_0(\frn)$ is quasi-projective, $\gamma$ is  finite and  $\cW(\frn)$ acts transitively on the fibers of $\gamma$; see \cite[pp.113]{Liu02}. 
Therefore for any $x_* \in Y_*(\frn)(K)$, the pre-image $\cP(x_*):=\gamma^{-1}(x_*) \subset Y_0(\frn)(\bar{K})$  is the $\cW(\frn)$-orbit  $\cW(\frn)x$ of  some  $x \in Y_0(\frn)(\bar{K})$.
We consider the following condition for $x_*$:

\begin{equation}
 \text{$\cP(x_*)$ is contained in $Y_0(\frn)(\sepK)$.} \tag{$*$}
\end{equation}
This is equivalent to  that $\cP(x_*)$ contains at least one $\sepK$-rational point of $Y_0(\frn)$. 
If ($*$) holds, then $\cP(x_*)$  is $G_K$-stable since $\gamma({}^sx)={}^s\gamma(x)={}^sx_*=x_*$ for any $x\in\cP(x_*)$ and $s\in G_K$.

\begin{lem}\label{lemgstable}
Let $x_* \in Y_*(\frn)(K)$ and assume either $q$ is odd or $\cP(x_*)$ has no  CM points.
Then $x_*$ satisfies $(*)$.
\end{lem}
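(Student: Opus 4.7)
My plan is to fix a geometric point $x \in \cP(x_*) \subset Y_0(\frn)(\bar{K})$, factor it through its image closed point $\bar{x}$ in $Y_0(\frn)_K$, and show that the residue field $\kappa(\bar{x})$ is separable over $K$; this is equivalent to the chosen embedding $\kappa(\bar{x}) \hookrightarrow \bar{K}$ landing in $\sepK$, hence to $x \in Y_0(\frn)(\sepK)$. Since the conclusion does not depend on which lift $x$ is chosen (as $\cW(\frn)$ acts transitively on $\cP(x_*)$ through $K$-automorphisms), it suffices to prove separability for one such $\bar{x}$. The morphism $\gamma:Y_0(\frn) \to Y_*(\frn)$ is finite and realises $Y_0(\frn)$ as a Galois cover with group $\cW(\frn)$ (acting faithfully, since its non-trivial fixed loci are finite by Remark \ref{remfixedpt}), so standard ramification theory controls $\kappa(\bar{x})/K$ through the decomposition group $D_{\bar{x}} = \{w \in \cW(\frn) : w\bar{x} = \bar{x}\}$, which is precisely the stabilizer analysed in Remark \ref{remfixedpt}.

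The crucial numerical observation is that $|\cW(\frn)| = 2^n$, so $|D_{\bar{x}}|$ is always a power of $2$. In the case $q$ odd (equivalently $p$ odd), $|D_{\bar{x}}|$ is automatically prime to $p$; tameness of ramification then forces the residue extension $\kappa(\bar{x})/K$ to be separable. In the case $q$ even with $\cP(x_*)$ containing no CM point, Remark \ref{remfixedpt} shows that every non-trivial Atkin-Lehner involution has only CM fixed points, so the absence of CM points in $\cP(x_*)$ forces $D_{\bar{x}} = \{\id\}$; then $\gamma$ is \'{e}tale at $\bar{x}$ and $\kappa(\bar{x})/K$ is again separable. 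In both cases I conclude that $\cP(x_*) \subset Y_0(\frn)(\sepK)$, which is $(*)$.

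The main obstacle will be justifying the implication ``$|D_{\bar{x}}|$ prime to $p$ $\Rightarrow$ $\kappa(\bar{x})/K$ separable''. I would supply this by passing to completed local rings: $D_{\bar{x}}$ acts on the complete discrete valuation ring $\widehat{\cO}_{Y_0(\frn), \bar{x}}$ with invariant ring $\widehat{\cO}_{Y_*(\frn), x_*}$, and when $|D_{\bar{x}}|$ is invertible the action of the inertia subgroup on a uniformiser can be diagonalised, exhibiting the local extension as a tamely ramified extension of complete DVRs whose residue field extension is automatically separable. This is the classical tame-quotient picture; alternatively one may cite SGA 1, Expos\'{e} V, or the standard theory of ramification for Galois covers of smooth curves. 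Once this ingredient is in hand, the case split above completes the proof without further calculation.
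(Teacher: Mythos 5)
Your proposal is correct, and the skeleton (decomposition group $D_x \subset \cW(\frn)$, Remark \ref{remfixedpt} to control it, separability of $\kappa(x)/K$) is the same as the paper's, but the final separability step is handled by a genuinely different device. The paper factors $\gamma$ as $Y_0(\frn) \xrightarrow{\tilde\gamma} Y_0(\frn)/D_x \to Y_*(\frn)$, observes (citing Liu) that the second arrow is \'etale at $\tilde\gamma(x)$, and then uses the precise bound $|D_x|\le 2$ from Remark \ref{remfixedpt} to get $[\kappa(x):\kappa(\tilde\gamma(x))]\le 2$, which is separable whenever $p\neq 2$. You instead go to completed local rings and use the identity $|D_x|=ef$ for a Galois extension of complete DVRs: since $D_x$ is a subgroup of $\cW(\frn)\cong(\bZ/2\bZ)^n$ it is a $2$-group, so for $p$ odd both $e$ and $f$ are prime to $p$, and an extension of degree prime to $p$ is automatically separable. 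Your route is slightly more uniform --- it does not need the sharp $|D_x|\le 2$, only the $2$-group structure of $\cW(\frn)$ --- at the cost of invoking the completed-local-ring/ramification-theory apparatus rather than the elementary \'etale-plus-degree-two factorization.

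One small imprecision worth tightening: you say the extension is ``tamely ramified... whose residue field extension is automatically separable,'' but in the standard terminology tame ramification already has residue separability built into its definition, so this reads as circular. The cleaner formulation of the step you want is exactly the arithmetic one above: $|D_x|=ef$ (full residue degree $f$, including inseparable degree) after completion, and $|D_x|$ prime to $p$ forces $f$ prime to $p$, hence $\kappa(x)/K$ separable because its inseparable degree is a $p$-power dividing $f$. With that phrasing the diagonalisation of the uniformiser is unnecessary and the argument is airtight. The case $q$ even, no CM points in $\cP(x_*)$, is handled identically in both proofs ($D_x$ trivial by Remark \ref{remfixedpt}, so \'etale).
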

\begin{proof}
Take $x \in \cP(x_*)$, so that $\cP(x_*)=\cW(\frn)x$.
Denote by $\kappa(x)$ and $\kappa(x_*)$ the residue fields at $x$ and $x_*$, respectively.
To check the condition $(*)$,  it is enough to show that the extension $\kappa(x)/\kappa(x_*)$ is separable.
Let $\tilde{\gamma}:Y_0(\frn)\ra Y_0(\frn)/D_x$ be the quotient by the decomposition group $D_x$ of $x$.
Then $\gamma:Y_0(\frn)\ra Y_*(\frn)$ factors as
\[
\xymatrix@C=36pt{
Y_0(\frn) \ar[rr]^-{\gamma} \ar[rd]_-{\tilde{\gamma}} &  &Y_*(\frn)  \\
 &  Y_0(\frn){/}D_x\ar[ru] &
}
\]
such that $Y_0(\frn){/}D_x \ra Y_*(\frn)$ is \'etale at $\tilde{x}:=\tilde{\gamma}(x)$; see \cite[pp.147]{Liu02}.
The assumption and Remark \ref{remfixedpt} imply that $[\kappa(x):\kappa(\tilde{x})]\leq 2$ and that $\kappa(x)=\kappa(\tilde{x})$ if $x$ is a non-CM point. 
Hence $\kappa(x)/\kappa(\tilde{x})$ is separable and so is $\kappa(x)/\kappa(x_*)$.
\end{proof}

\begin{prop}\label{rat}
If $x_* \in Y_*(\frn)(K)$ satisfies $(*)$, then any  $x \in \cP(x_*)$ is represented by a pair $(\phi, \Lambda)$ such that 
$\phi$ is virtually $K$-rational and  defined over a polyquadratic extension of $K$.
\end{prop}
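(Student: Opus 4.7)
Fix $x_\ast \in Y_\ast(\frn)(K)$ satisfying $(\ast)$ and any $x \in \cP(x_\ast)$. Since $(\ast)$ gives $\cP(x_\ast) \subset Y_0(\frn)(\sepK)$, we may represent $x=[\phi,\Lambda]$ with $\phi$ a Drinfeld $A$-module over $\sepK$. The main strategy is to use the $G_K$-action on the finite set $\cP(x_\ast) = \cW(\frn)\cdot x$ to produce a homomorphism into a $2$-elementary abelian group whose kernel cuts out the desired polyquadratic field of definition, while the Atkin--Lehner description of that action supplies the isogenies needed for virtual $K$-rationality.

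\textbf{Step 1: building the character $\chi$.} Because $\cP(x_\ast)$ is $G_K$-stable and equal to $\cW(\frn)\cdot x$, for each $s \in G_K$ there is $w_s \in \cW(\frn)$, unique modulo the decomposition group $D_x$ of Remark \ref{remfixedpt}, with ${}^s x = w_s x$. Since every Atkin--Lehner involution is defined over $Q$, Galois commutes with them: ${}^{st}x = {}^s({}^t x) = {}^s(w_t x) = w_t({}^s x) = w_t w_s x$. Together with abelianness of $\cW(\frn)$, this gives $w_{st} \equiv w_s w_t \pmod{D_x}$, so $\chi : G_K \to \cW(\frn)/D_x$, $s \mapsto w_s D_x$, is a well-defined group homomorphism. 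Because $\cW(\frn)\cong(\bZ/2\bZ)^n$, its quotient $\cW(\frn)/D_x$ is also $2$-elementary abelian, hence so is $\Im\chi$.

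\textbf{Step 2: virtual $K$-rationality.} Write $w_s = w_{\frm_s}$ with $\frm_s \mid \frn$, $(\frm_s,\frn/\frm_s)=1$. The moduli description preceding Diagram \eqref{diagram} shows that $w_{\frm_s}[\phi,\Lambda]$ is represented by a pair whose first component $\phi_{\frm_s}$ is linked to $\phi$ by a cyclic $\frm_s$-isogeny $\phi \to \phi_{\frm_s}$. The equality ${}^s x = w_{\frm_s}x$ in $Y_0(\frn)(\bar K)$ then yields a $\bar K$-isomorphism ${}^s\phi \cong \phi_{\frm_s}$, and composing with the $\frm_s$-isogeny gives an isogeny $\phi \to {}^s\phi$. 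Hence $\phi$ is virtually $K$-rational in the sense of Definition \ref{defKvirtual}.

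\textbf{Step 3: the polyquadratic model.} Let $L := (\sepK)^{\ker\chi}$. Then $\Gal{L}{K} \hookrightarrow \Im\chi$ is $2$-elementary abelian, so $L/K$ is polyquadratic. For $h \in \ker\chi$ we have $w_h \in D_x$, so ${}^h x = w_h x = x$, showing $x \in Y_0(\frn)(L)$. The forgetful morphism $\theta:Y_0(\frn)\to Y_0(1)$ being defined over $Q$, $\theta(x)=[\phi]\in Y_0(1)(L)$ forces $j_\phi \in L$. By the remark after Example \ref{exKvirtual}, $\phi$ admits an $L$-model; replacing $(\phi,\Lambda)$ with a $\bar K$-isomorphic pair whose first component is that $L$-model completes the proof.

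\textbf{Main obstacle.} The delicate point is the well-definedness and homomorphism property of $\chi$ when $D_x$ is nontrivial (which can happen in the CM case, especially for $q$ even, per Remark \ref{remfixedpt}): one must verify that the ambiguity in the choice of $w_s$ is harmless. This is precisely why $D_x$ is quotiented out, and it works because $\cW(\frn)$ is abelian so that conjugation by elements of $D_x$ is trivial and the cocycle identity survives in $\cW(\frn)/D_x$. The remaining verifications (that $L$ is polyquadratic, that $j_\phi \in L$) are then formal.
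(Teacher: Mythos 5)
Your proof is correct and follows essentially the same route as the paper's: identify the $\cW(\frn)$-orbit with $\cP(x_*)$, read off virtual $K$-rationality from the moduli description of $w_{\frm_s}x$, and cut out a polyquadratic field $L$ as the fixed field of the kernel of $G_K \to \cW(\frn)/D_x$. You supply the explicit verification that $s \mapsto w_sD_x$ is a well-defined homomorphism (the paper states this without detail), and you pass through $j_\phi \in L$ to get the $L$-model where the paper invokes $L$-rationality of $x$ directly; these are the same idea.
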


\begin{proof}
Take a point $x=[\phi, \Lambda] \in \cP(x_*)$, where $\phi$ is defined over $\sepK$ and $\Lambda$ is $\sepK$-rational.
Let $s \in G_K$.
 Then  ${}^sx$ is represented by $({}^s\phi,{}^s\Lambda)$.
We see that 
 there is an involution $w_{\frm_s} \in \cW(\frn)$ satisfying ${}^sx=w_{\frm_s}x$ because    $\cP(x_*)=\cW(\frn)x$  is $G_K$-stable.
Hence  ${}^s\phi$   admits a cyclic $\frm_s$-isogeny to  $\phi$ and so $\phi$ is virtually $K$-rational.
Now the above correspondence $s \mapsto w_{\frm_s}$ induces a well-defined group homomorphism 
\[
f:G_K \ra \cW(\frn){/}D_x.
\]
Let $L \subset \sepK$ be the fixed subfield of $\Ker f$.
Then it is polyquadratic over $K$ since $\Gal{L}{K}$ injects into $\cW(\frn){/}D_x \cong (\bZ/2\bZ)^m$ for some $m \geq 0$. 
Then $x$ an $L$-rational point and hence $\phi$ has an $L$-model. 
\end{proof}

\begin{rem}\label{rem:central}
Let $\phi$ be virtually $K$-rational  and $\frn \subset A$ a non-zero square-free ideal.
Following the $\bQ$-curve case, we say that  $\phi$ is {\it central} of degree $\frn$ if ${}^s\phi$ admits an isogeny ${}^s\phi \ra \psi$ of degree dividing $\frn$ for any $s\in G_K$. 
Theorem \ref{thmmain} and  Proposition \ref{rat} imply that  if $\phi$ has no CM, then it is isogenous to a central one defined over a polyquadratic extension of  $K$.
Thus the existence of non-CM $K$-rational points of $Y_*(\frn)$ is equivalent to that of 
non-CM central virtually $K$-rational Drinfeld $A$-modules of rank two of degree $\frn$.
\end{rem}


\section{Isogeny trees}

In this final section, we prove  Theorem \ref{thmmain}.
For the rest of this paper,  the terminology ``Drinfeld $A$-module'' always refers to rank-two  non-CM one defined  over $\sepK$.
We use the symbol $\frp$ for a non-zero prime ideal of $A$.

Let $\frp \subset A$ be a non-zero prime ideal and $n>0$ a positive  integer.
We first recall  some facts on $\frp^n$-isogenies.
Let $\phi$ and $\psi$ be Drinfeld $A$-modules.

\begin{lem}\label{lemdecomp}
Every cyclic $\frp^n$-isogeny $\mu:\phi \ra \psi$ factors as $\mu=\mu_n\mu_{n-1}\cdots \mu_1$ with some $\frp$-isogenies $\mu_i$.
The $\mu_i$ are unique up to $\sepK$-equivalence.
\end{lem}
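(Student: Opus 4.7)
The plan is to build the factorization inductively from the unique chain of submodules of $\Ker\mu\cong A/\frp^n$, and to prove uniqueness by the same structural observation. Write $\Lambda:=\Ker\mu$ and, for $0\le i\le n$, set $\Lambda_i:=\Lambda\cap\phi[\frp^i]$; since every submodule of $A/\frp^n$ has the form $\frp^j\!A/\frp^n$, the $\Lambda_i$ form the unique strictly increasing chain with $\Lambda_i\cong A/\frp^i$ and $\Lambda_i/\Lambda_{i-1}\cong A/\frp$. Each $\Lambda_i$ is $\sepK$-rational, as it sits inside the $\sepK$-rational module $\Lambda$.

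For existence, I would apply the standard construction of cyclic isogenies with prescribed kernel recalled in Section 2 (cf.\ \cite[pp.\ 37]{DH87}) to obtain, for each $i$, a cyclic $\frp^i$-isogeny $\nu_i\colon\phi\to\phi_i$ with $\Ker\nu_i=\Lambda_i$, taking $\nu_0=\id_\phi$ and $\phi_0=\phi$. Next, $\nu_{i-1}(\Lambda_i)\cong\Lambda_i/\Lambda_{i-1}\cong A/\frp$ is a cyclic $\frp$-kernel of $\phi_{i-1}$, so the same construction yields a $\frp$-isogeny $\mu_i\colon\phi_{i-1}\to\phi_i'$ with $\Ker\mu_i=\nu_{i-1}(\Lambda_i)$. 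A short kernel chase gives
\[
\Ker(\mu_i\nu_{i-1}) \;=\; \nu_{i-1}^{-1}\!\bigl(\nu_{i-1}(\Lambda_i)\bigr) \;=\; \Lambda_i+\Lambda_{i-1} \;=\; \Lambda_i,
\]
so $\mu_i\nu_{i-1}$ is a cyclic $\frp^i$-isogeny with the same kernel as $\nu_i$. The uniqueness clause of that construction provides an isomorphism $\lambda_i\colon\phi_i'\to\phi_i$ with $\nu_i=\lambda_i\mu_i\nu_{i-1}$, and after absorbing $\lambda_i$ into $\mu_i$ I may assume $\nu_i=\mu_i\nu_{i-1}$. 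Iterating yields $\nu_n=\mu_n\mu_{n-1}\cdots\mu_1$, and since $\Ker\nu_n=\Lambda=\Ker\mu$, the isogenies $\nu_n$ and $\mu$ differ by an isomorphism which I absorb into the outermost factor $\mu_n$.

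For uniqueness up to $\sepK$-equivalence, suppose $\mu=\mu_n\cdots\mu_1=\mu'_n\cdots\mu'_1$ are two such factorizations, and set $\alpha_i:=\mu_i\cdots\mu_1$ and $\alpha'_i:=\mu'_i\cdots\mu'_1$. By Lemma \ref{lemmulti} each $\alpha_i$ is a $\frp^i$-isogeny; its kernel is a submodule of the cyclic module $\Lambda$ and so is itself cyclic, hence coincides with the unique cyclic $\frp^i$-submodule $\Lambda_i$ of $\Lambda$, and likewise for $\alpha'_i$. Uniqueness of cyclic $\frp^i$-isogenies then supplies isomorphisms $\sigma_i$ with $\alpha'_i=\sigma_i\alpha_i$, whence
\[
\mu'_i \;=\; \alpha'_i(\alpha'_{i-1})^{-1} \;=\; \sigma_i\,\mu_i\,\sigma_{i-1}^{-1},
\]
exhibiting the $\sepK$-equivalence of $\mu_i$ and $\mu'_i$. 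The only point demanding care is tracking $\sepK$-rationality of the intermediate data through the chain, which is automatic because at every stage the kernel being quotiented is $\sepK$-stable; otherwise the proof is a formal consequence of the fact that $A/\frp^n$ has a unique composition series.
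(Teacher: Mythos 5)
Your proof is correct, and it rests on the same underlying idea as the paper's: the module $\Ker\mu\cong A/\frp^n$ has a unique chain of $\frp$-power submodules $\Lambda_1\subset\Lambda_2\subset\cdots\subset\Lambda_n$, and this chain forces both the existence and the essential uniqueness of the factorization. The paper phrases this as a short induction on $n$, peeling off the unique $\frp$-kernel $\Lambda\subset\Ker\mu$ to write $\mu=\mu_{n-1}\mu_1$ and then invoking the inductive hypothesis, whereas you build the entire chain of intermediate isogenies $\nu_i$ at once and splice in the $\mu_i$ by comparing kernels; your uniqueness argument likewise identifies $\Ker(\mu_i\cdots\mu_1)$ with $\Lambda_i$ directly rather than by induction. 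Either presentation is fine; yours is longer but makes the role of the composition series of $A/\frp^n$ and the $\sepK$-rationality of the intermediate kernels completely explicit.
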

\begin{proof}
We prove this  by induction on $n$.
The case where $n=1$ is trivial.
Assume that $n \geq 2$ and  $\deg\mu=\frp^n$.
Since  $\Ker\mu$ contains a unique (cyclic) $\frp$-kernel $\Lambda$, 
$\mu$ factors as $\mu=\mu_{n-1}\mu_{1}$ such that  $\Ker\mu_1=\Lambda$ and $\mu_{n-1}$ is of degree ${\frp^{n-1}}$.
Since $\mu_1$ is uniquely determined up to $\sepK$-equivalence, we obtain the conclusion.
\end{proof}

\begin{prop}\label{proppc}
Let $\mu:\phi \ra \psi$ be a $\frp^n$-isogeny. 
Then it is cyclic if and only if it is primitive.
\end{prop}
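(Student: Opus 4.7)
The plan is to prove the two implications separately, in both cases exploiting the structure of $\Ker\mu$ inside the rank-two $A/\frp^n$-module $\phi[\frp^n]$ together with the non-CM hypothesis in force throughout Section 5.

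For ``primitive $\Rightarrow$ cyclic'' I would argue by contrapositive. Since $\frp^n$ annihilates $\Ker\mu$, the elementary divisor theorem over the local principal ideal ring $A/\frp^n$ gives $\Ker\mu\cong A/\frp^{e_1}\oplus A/\frp^{e_2}$ with $e_1+e_2=n$. If $\mu$ is not cyclic then both $e_i$ are positive, and so $\Ker\mu$ contains the full $\frp$-torsion $\phi[\frp]=\Ker\phi_{a_\frp}$. Invoking the standard correspondence between finite $A$-submodules of ${}_\phi\sepK$ and isogenies out of $\phi$ (equivalently the right division algorithm in $\ad{\sepK}$ recalled in Section 2), I obtain a factorization $\mu=\mu'\phi_{a_\frp}$ for some isogeny $\mu':\phi\ra\psi$; Lemma \ref{lemmulti} then forces $\deg\mu'=\frp^{n-2}$, a proper divisor of $\frp^n$, contradicting primitivity of $\mu$.

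For ``cyclic $\Rightarrow$ primitive'' the non-CM assumption gives $\End_{\sepK}(\phi)\cong A$, so $\Hom_{\sepK}(\phi,\psi)$ is a non-zero torsion-free $A$-module of $Q$-rank one, and hence is free of rank one over the PID $A$. Let $\eta_0$ be a generator and write $\mu=\eta_0\phi_c$; then $\phi[(c)]\subseteq\Ker\mu\cong A/\frp^n$, so $(A/(c))^2$ embeds into a cyclic $A$-module, which forces $(c)=A$. Consequently $\mu$ itself generates $\Hom_{\sepK}(\phi,\psi)$ up to $\bF_q^\times$, so every $\eta\in\Hom_{\sepK}(\phi,\psi)$ has the form $\eta=\mu\phi_b$ for some $b\in A$, and Lemma \ref{lemmulti} yields $\deg\eta=\frp^n(b)^2$. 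Divisibility of this by $\frp^n$ forces $(b)=A$, whence $\deg\eta=\deg\mu$ and $\mu$ is primitive.

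The only non-routine step is the factorization $\mu=\mu'\phi_{a_\frp}$ in the first direction. It should follow cleanly from the kernel--isogeny dictionary for Drinfeld $A$-modules reviewed in Section 2, applied to the inclusion $\phi[\frp]\subseteq\Ker\mu$; the rest of the argument is pure $A$-module bookkeeping.
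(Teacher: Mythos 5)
Your proof is correct and follows essentially the same strategy as the paper's: for the contrapositive of ``primitive $\Rightarrow$ cyclic'' both arguments observe that a non-cyclic $\frp$-power kernel contains $\phi[\frp]$, so $\mu$ factors through $\phi_{a_\frp}$ and admits a proper divisor; for ``cyclic $\Rightarrow$ primitive'' both express $\mu$ as $\phi_a$ composed with a generator (equivalently a primitive isogeny) of the rank-one $A$-module of $\sepK$-rational homomorphisms and use cyclicity of $\Ker\mu$ to force $a$ to be a unit. One small slip in your last sentence: the relevant divisibility is $\deg\eta \mid \deg\mu = \frp^n$, i.e.\ $\frp^n(b)^2 \mid \frp^n$, which is what forces $(b)=A$; as written the divisibility is stated in the opposite (and vacuous) direction.
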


\begin{proof}
Take a primitive isogeny  $\eta:\phi \ra \psi$.
Then  $\mu=\eta\phi_a$ for some non-zero $a \in A$ since $\phi$ has no CM.
Suppose that $\mu$ is cyclic.
Considering the degree of $\mu=\eta\phi_a$, we have either 
$a \in \frp$ or $a \in \bF_q^\times$.
If $a \in \frp$, then  $\Ker\mu=\eta^{-1}(\psi[a])$ is not cyclic.
Hence   $a \in \bF_q^\times$ and so $\mu$ is  primitive.
Conversely, if  $\mu$ is not cyclic, then  $\Ker\mu=\Lambda_1 \oplus \Lambda_2$ for some non-trivial cyclic $\frp$-power kernels $\Lambda_i$ of $\phi$.
Then  $\Ker\mu$ in particular  contains $\phi[\frp]$.
This means that  $\mu$ is not primitive. 
\end{proof}

\begin{cor}\label{cor:primitive}
For any  isogeny $\mu:\phi \ra \psi$, it  is cyclic if and only if it is primitive. 
\end{cor}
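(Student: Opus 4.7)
The plan is to exploit the no-CM hypothesis (so that $\End{\sepK}{\phi} \cong A$) in tandem with the dual isogeny machinery of Proposition \ref{propdual}; because we are in rank two, $\deg\hat{\mu} = \deg\mu$ by Remark \ref{remdegree}, and this symmetry between $\mu$ and $\hat{\mu}$ is what drives the short argument. I would proceed directly rather than reducing to the prime-power case of Proposition \ref{proppc}, though one could alternatively decompose $\mu$ into prime-power-degree pieces and apply that proposition factor by factor.

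For cyclic $\Rightarrow$ primitive, I would take $\mu:\phi \ra \psi$ cyclic of degree $\frn$ and let $\eta:\phi \ra \psi$ be any isogeny with $\deg\eta = \fra \mid \frn$. Then $\hat{\eta}\mu \in \End{\sepK}{\phi} = A$, so $\hat{\eta}\mu = \phi_b$ for some $b \in A$; Lemma \ref{lemmulti} together with $\deg\hat{\eta} = \fra$ gives $(b)^2 = \fra\frn$. On the other hand $\Ker\mu \subset \Ker(\hat{\eta}\mu) = \phi[(b)]$, and since $\Ker\mu \cong A/\frn$ has annihilator exactly $\frn$, we must have $\frn \mid (b)$. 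The three relations $\frn \mid (b)$, $(b)^2 = \fra\frn$, and $\fra \mid \frn$ force $(b) = \frn$ and hence $\fra = \frn$, so $\mu$ is primitive.

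For the converse, suppose $\mu$ is primitive with $\deg\mu = \frn = \prod_i \frp_i^{n_i}$, and assume for contradiction that $\Ker\mu$ is not cyclic. Primary decomposition of the finite $A$-module $\Ker\mu$ produces some $\frp_i$-component of the form $A/\frp_i^a \oplus A/\frp_i^b$ with $a,b \geq 1$, which therefore contains the full $\frp_i$-torsion $\phi[\frp_i] = \Ker\phi_{a_{\frp_i}}$, where $a_{\frp_i}$ is the monic generator of $\frp_i$. Right division in $\ad{\sepK}$ then yields a factorization $\mu = \mu'\phi_{a_{\frp_i}}$; the intertwining $\mu\phi_T = \psi_T\mu$ together with the fact that $\phi_{a_{\frp_i}}$ is a non-zero-divisor in the skew polynomial ring shows $\mu'$ is itself an isogeny $\phi \ra \psi$, and Lemma \ref{lemmulti} gives $\deg\mu' = \frn/\frp_i^2$, a proper divisor of $\frn$, contradicting primitivity. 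The step I expect to require the most care is this last one: namely, checking that a non-cyclic $\frp_i$-primary component really does contain all of $\phi[\frp_i] \cong (A/\frp_i)^{\oplus 2}$ and that the resulting right factor $\mu'$ lies in $\Hom{\sepK}{\phi}{\psi}$ rather than factoring through some auxiliary Drinfeld module.
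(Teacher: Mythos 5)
Your proof is correct, but it takes a genuinely different route from the paper's. The paper proves the corollary by reduction: it writes $\deg\mu=\prod_i\frp_i^{\delta_i}$, decomposes $\mu$ according to the $\frp_i$-primary components of $\Ker\mu$, and then invokes Proposition \ref{proppc} (the prime-power case) factor by factor. You instead argue directly, never isolating the prime-power piece. For cyclic $\Rightarrow$ primitive you exploit $\End{\sepK}{\phi}\cong A$ by forming $\hat{\eta}\mu=\phi_b$ and playing off the degree identity $(b)^2=\fra\frn$ against the kernel containment $\frn\mid(b)$; the paper's Proposition \ref{proppc} instead leans on the (somewhat implicitly justified) fact that $\Hom{\sepK}{\phi}{\psi}$ is a free rank-one $A$-module generated by a primitive isogeny, so that $\mu=\eta\phi_a$. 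For primitive $\Rightarrow$ cyclic, your observation that a non-cyclic $\frp_i$-primary component must contain all of $\phi[\frp_i]$, forcing $\mu=\mu'\phi_{a_{\frp_i}}$ with $\deg\mu'=\frn/\frp_i^2$ a proper divisor of $\frn$, is the same mechanism as the converse direction in Proposition \ref{proppc}, just carried out in one step rather than after reduction. What your route buys is self-containedness and a cleaner display of the two ingredients doing the real work, namely the rank-two symmetry $\deg\hat{\mu}=\deg\mu$ and the identification of $\End{\sepK}{\phi}$ with $A$; what the paper's route buys is brevity, since Proposition \ref{proppc} has already been established.
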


\begin{proof}
Set $\deg\mu=\prod_{i=1}^n\frp_i^{\delta_i}$, where $\frp_i$ are distinct prime ideals and $\delta_i>0$. 
Then for each $i$, $\Ker\mu$ contains a unique  $\frp_i^{\delta_i}$-kernel and so $\mu$ decomposes as $\mu=\eta_i\mu_i$ such that 
$\deg\mu_i=\frp_i^{\delta_i}$ and $\deg\eta_i=\prod_{i \neq j}\frp_j^{\delta_j}$.
Applying Proposition \ref{proppc} to each $\mu_i$, we have the conclusion.
\end{proof}

Suppose that  $\phi$ and $\psi$ are isogenous to each other and 
take a primitive isogeny 
$\mu:\phi \ra \psi$.
For any non-zero prime ideal $\frp \subset A$, define
\begin{equation}\label{eqdelta}
\delta_\frp(\phi,\psi) :=\max \{ n \in \bZ_{\geq 0};\ \deg\mu \  \mbox{is divisible by}\ \frp^n\}.
\end{equation}
It is independent of the choice of $\mu$ since we now consider the non-CM case.

\begin{lem}\label{lemdelta}
Let $\phi$, $\psi$ and $\frp$ be as above. Then
 $\delta_\frp(\phi,\psi) = \delta_\frp(\psi,\phi)$ and  $\delta_\frp({}^s\phi,{}^s\psi) = \delta_\frp(\phi,\psi)$  hold for any $s \in G_K$.
\end{lem}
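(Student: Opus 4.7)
The plan is to handle the two equalities separately, in each case exhibiting a primitive isogeny in the ``new'' direction that has the same degree as a chosen primitive isogeny $\mu:\phi\to\psi$ used to compute $\delta_\frp(\phi,\psi)$. Recall that by Corollary~\ref{cor:primitive}, under the standing hypotheses of this section (rank two, no CM), ``primitive'' is equivalent to ``cyclic''; this interchangeability is what I would exploit repeatedly.

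For the Galois invariance $\delta_\frp({}^s\phi,\,{}^s\psi) = \delta_\frp(\phi,\psi)$, I would fix a primitive isogeny $\mu:\phi\to\psi$, so $\Ker\mu \cong A/\deg\mu$. Since $s$ fixes $A \subset K$, conjugation by $s$ restricts to an $A$-module isomorphism $\Ker\mu \cong \Ker {}^s\mu$, whence $\Ker {}^s\mu$ is again cyclic of the same shape and ${}^s\mu$ is primitive (Corollary~\ref{cor:primitive} again). Combined with the observation recalled at the start of Section~3 that $\deg {}^s\mu = \deg \mu$, this yields $\delta_\frp({}^s\phi,\,{}^s\psi) = v_\frp(\deg\mu) = \delta_\frp(\phi,\psi)$.

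For the symmetry $\delta_\frp(\phi,\psi) = \delta_\frp(\psi,\phi)$, I would pass to the dual isogeny $\hat\mu:\psi\to\phi$, whose degree equals $\deg\mu$ by Remark~\ref{remdegree} (this is where the rank-two hypothesis enters crucially). Pick, independently, a primitive isogeny $\lambda:\psi\to\phi$. Using the non-CM hypothesis, $\Hom{\sepK}{\psi}{\phi}$ is a free $A$-module of rank one generated by $\lambda$, and symmetrically $\Hom{\sepK}{\phi}{\psi}$ is generated by $\mu$; this lets me write $\hat\mu = \lambda \psi_a$ and $\hat\lambda = \mu \phi_{a'}$ for some $a,a' \in A$. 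Taking degrees (using Lemma~\ref{lemmulti} together with $\deg \phi_b = (b)^2$ in rank two), I obtain
\[
\deg\mu = \deg\hat\mu = \deg\lambda\cdot(a)^2, \qquad \deg\lambda = \deg\hat\lambda = \deg\mu\cdot(a')^2,
\]
and combining these forces $(aa')^2 = A$, so $a \in \bF_q^\times$. Hence $\deg\lambda = \deg\mu$, giving $\delta_\frp(\psi,\phi) = \delta_\frp(\phi,\psi)$.

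The subtle step I expect to be the main obstacle is the second equality, and within it the claim that the dual of a primitive isogeny is again primitive (equivalently, $a \in \bF_q^\times$ above). This uses both standing hypotheses of the section in an essential way: rank two to get $\deg\hat\mu = \deg\mu$, and no CM to make the Hom-modules $A$-free of rank one. Everything else reduces to routine bookkeeping with degrees via Lemma~\ref{lemmulti}.
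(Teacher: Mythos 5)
Your proof is correct and rests on the same two facts the paper invokes in its one-line proof: $\deg\hat\mu=\deg\mu$ (Remark~\ref{remdegree}, using rank two) and $\deg{}^s\mu=\deg\mu$ (noted at the start of Section~3). You simply make explicit the implicit step that primitivity is preserved — for ${}^s\mu$ via the kernel isomorphism, and for the dual direction via the $(aa')^2=A$ bookkeeping — which is a reasonable expansion of the paper's terse argument rather than a different route.
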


\begin{proof}
It  follows from that  $\deg\mu=\deg\hat{\mu}$ and $\deg{}^s\mu=\deg\mu$ for any $s \in G_K$. 
\end{proof}

\begin{rem}\label{remppart}
Suppose that 
$\delta_\frp(\phi, \psi)=n$.
 Then  there exists a Drinfeld $A$-module $\pi_\frp(\psi)$ such that any primitive isogeny $\mu:\phi \ra \psi$ decomposes as $\mu=\eta\mu_n$, where 
$\mu_n:\phi \ra \pi_\frp(\psi)$ is a $\frp^n$-isogeny and $\eta:\pi_\frp(\psi)\ra \psi$ is of degree prime to $\frp$.
\[
\xymatrix@C=36pt{
\phi \ar[rr]^-{\mu} \ar[rd]_-{\mu_n} &  &\psi  \\
 &  \pi_\frp(\psi)\ar[ru]_-{\eta} &
}
\]
By construction, the $\pi_\frp(\psi)$ is unique up to isomorphisms.
Thus it follows that $\phi$ is isomorphic to $\psi$ if and only if  $\delta_\frp(\phi,\psi)=0$ for all $\frp$.
\end{rem}

From now on, we  identify all isomorphic Drinfeld $A$-modules.
Under this setting, all $\sepK$-equivalent isogenies are identified. 
Notice that  the notions of   degree of isogenies, primitive isogenies and  dual isogenies are well-defined.

We
fix $\phi$ a virtually $K$-rational Drinfeld $A$-module.
Denote by $\I$ the set of Drinfeld $A$-modules isogenous to $\phi$.
Then every $\psi \in \I$ is also virtually $K$-rational and  hence ${}^s\psi$  admits an isogeny to $\phi$ for any $s \in G_K$.
Therefore $G_K$ acts on $\I$ by $(s, \psi) \mapsto {}^s\psi$.

Let $\frp \subset A$ be a non-zero prime ideal.
Consider the function
\[
\delta_\frp: \I \times \I \ra \bZ_{\geq 0}
\]
defined  by (\ref{eqdelta}).
It is symmetric and $G_K$-invariant by Lemma \ref{lemdelta}.
Let 
$
\Ip
$
be  the subset of $\I$ consisting of those admitting $\frp^n$-isogenies to $\phi$  
for some $n \geq 0$.
Then  $G_K$ also acts on $\Ip$. 
For any $\psi \in \I$, let  $\pi_\frp(\psi) \in \Ip$  be  as in Remark \ref{remppart}.
Since  $\pi_\frp(\psi)=\psi$ holds if $\psi \in \Ip$, we obtain the projection
\[
\begin{array}{cccc}
\pi_\frp: & \I & \thra &\Ip 
\end{array}.
\]
For any  $\psi \in \I$, it follows by construction that $\pi_\frp(\psi) = \phi$ for almost all $\frp$.

\begin{lem}\label{lempi} \ 
Let $\psi,\psi_1,\psi_2 \in \I$ and let $\frp$ be as usual. 
\begin{itemize}
\item[{\rm(1)}] $\pi_\frp(\psi)$ is the unique element of $\Ip$  satisfying
$\delta_\frp(\pi_\frp(\psi), \psi) = 0$.
\item[{\rm (2)}] $\delta_\frp(\pi_\frp(\psi_1), \pi_\frp(\psi_2))=\delta_\frp(\psi_1, \psi_2)$.
\item[{\rm(3)}] $\pi_\frp(\psi_1)=\pi_\frp(\psi_2)$ if and only if $\delta_\frp(\psi_1,\psi_2) = 0$. 
In particular, 
$\psi_1=\psi_2$ if and only if $\pi_\frp(\psi_1)=\pi_\frp(\psi_2)$ for all $\frp$.
\end{itemize}
\end{lem}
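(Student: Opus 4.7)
My overall plan is to prove (1) directly from the definition in Remark \ref{remppart}, deduce (2) from a ``min'' characterization of $\delta_\frp$ combined with Lemma \ref{lemmulti}, then deduce (3) formally from (1) and (2). Since $\phi$ has no CM, Lemma \ref{propnoncmdegree} (combined with the paper's identification of $\sepK$-equivalent isogenies) gives me that for any isogenous $\phi_1, \phi_2$ there is a unique primitive isogeny up to $\bF_q^\times$, and every nonzero isogeny $\alpha\colon\phi_1 \to \phi_2$ has the form $\alpha = \mu_{\mathrm{prim}}\phi_{1,a}$ for some nonzero $a \in A$. Via Lemma \ref{lemmulti} this produces the key identity
\[
\delta_\frp(\phi_1, \phi_2) = \min_{\alpha\colon\phi_1\to\phi_2,\ \alpha\neq 0} v_\frp(\deg \alpha),
\]
with the minimum attained at $a \in \bF_q^\times$. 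I will use this freely.

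For (1), existence is essentially built into Remark \ref{remppart}: the prime-to-$\frp$ factor $\eta$ of $\mu = \eta\mu_n$ has degree coprime to $\frp$, and it is itself primitive (both factors of a cyclic isogeny being cyclic, by Corollary \ref{cor:primitive}), whence $\delta_\frp(\pi_\frp(\psi),\psi) = 0$. For uniqueness, suppose $\psi' \in \Ip$ satisfies $\delta_\frp(\psi',\psi) = 0$: pick primitive $\sigma\colon\phi\to\psi'$ of degree $\frp^m$ and primitive $\tau\colon\psi'\to\psi$ of degree $\frm$ prime to $\frp$. The exact sequence
\[
0 \to \Ker\sigma \to \Ker(\tau\sigma) \to \Ker\tau \to 0
\]
has cyclic outer terms annihilated by the coprime ideals $\frp^m$ and $\frm$, so CRT forces $\Ker(\tau\sigma) \cong A/(\frp^m\frm)$; i.e., $\tau\sigma$ is cyclic, hence primitive by Corollary \ref{cor:primitive}. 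Matching degrees shows $\tau\sigma$ agrees with the primitive $\phi \to \psi$ up to $\bF_q^\times$, and inside its cyclic kernel the unique $\frp$-primary submodule coincides both with $\Ker\sigma$ and with $\Ker\mu_n$, so $\sigma$ and $\mu_n$ have the same kernel, forcing $\psi' = \pi_\frp(\psi)$.

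For (2), take primitive factorizations $\mu_i = \eta_i\mu_{i,n_i}\colon\phi\to\pi_\frp(\psi_i)\to\psi_i$ with $\deg\eta_i$ prime to $\frp$. For any nonzero $\alpha\colon\psi_1\to\psi_2$, the composition $\hat\eta_2\alpha\eta_1\colon\pi_\frp(\psi_1)\to\pi_\frp(\psi_2)$ has $v_\frp$ of its degree equal to $v_\frp(\deg\alpha)$, by Lemma \ref{lemmulti} and the equality $\deg\hat\eta_i = \deg\eta_i$ from Remark \ref{remdegree}; symmetrically, any $\beta\colon\pi_\frp(\psi_1)\to\pi_\frp(\psi_2)$ yields $\eta_2\beta\hat\eta_1\colon\psi_1\to\psi_2$ with $v_\frp$ of degree equal to $v_\frp(\deg\beta)$. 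Applying the min characterization of $\delta_\frp$ on each side produces the two inequalities that give (2).

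For (3), the first equivalence is immediate: if $\pi_\frp(\psi_1) = \pi_\frp(\psi_2)$ then $\delta_\frp(\pi_\frp(\psi_1),\pi_\frp(\psi_2)) = 0$ (take $\alpha = \id$), so by (2) $\delta_\frp(\psi_1,\psi_2) = 0$; conversely, if $\delta_\frp(\psi_1,\psi_2) = 0$, (2) gives $\delta_\frp(\pi_\frp(\psi_1),\pi_\frp(\psi_2)) = 0$, and applying the uniqueness half of (1) to $\pi_\frp(\psi_1) \in \Ip$ (with $\psi$ replaced by $\pi_\frp(\psi_2)$, using $\pi_\frp(\pi_\frp(\psi_2)) = \pi_\frp(\psi_2)$) yields $\pi_\frp(\psi_1) = \pi_\frp(\psi_2)$. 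The ``in particular'' follows by letting $\frp$ range over all primes: the primitive $\psi_1 \to \psi_2$ then has degree divisible by no prime, hence equal to $A$, hence is an isomorphism. The main obstacle in this plan is the cyclicity check in (1)---verifying via CRT that the kernel of $\tau\sigma$ is cyclic when the two factors have coprime degrees---since once this is established, everything else is bookkeeping with the min formula and the non-CM uniqueness of primitives.
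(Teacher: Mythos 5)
Your proof is correct and follows essentially the same approach as the paper: (1) via the existence of a prime-to-$\frp$ isogeny (you show $\Ker(\tau\sigma)$ is cyclic via an $\mathrm{Ext}$/CRT argument and match $\frp$-primary parts, whereas the paper more briefly composes the two prime-to-$\frp$ isogenies through $\psi$ and notes the primitive $\pi_\frp(\psi)\to\psi'$ must have degree both a $\frp$-power and prime to $\frp$); (2) by conjugating with $\eta_i$ and $\hat\eta_i$ and comparing $\frp$-valuations of degrees, which is exactly the paper's argument repackaged as a ``min'' formula; and (3) as a formal consequence. The minor variations are cosmetic and do not change the substance.
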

\begin{proof}
By construction, we have $\delta_\frp(\pi_\frp(\psi), \psi) = 0$. 
Suppose that   $\psi' \in \Ip$ satisfies $\delta_\frp(\psi', \psi) = 0$.
 Then we can take   a primitive isogeny 
$\mu: \pi_\frp(\psi)\ra \psi'$ with  $\deg\mu=\frp^n$ for some $n \geq 0$
since $\pi_\frp(\psi), \psi' \in \Ip$.
Here  $\delta_\frp(\pi_\frp(\phi), \psi) =\delta_\frp(\psi', \psi) = 0$ implies that there is an isogeny $\pi_\frp(\psi)\ra \psi'$ with degree prime to $\frp$.
Hence $n=0$ and so we have (1).
To prove (2), set $n:=\delta_\frp(\pi_\frp(\psi_1), \pi_\frp(\psi_2))$ and take  a primitive $\frp^n$-isogeny 
$\mu:\pi_\frp(\psi_1) \ra \pi_\frp(\psi_2)$.
Let $\tilde{\mu}:\psi_1 \ra \psi_2$ be a primitive isogeny.
Then $\frp^{\delta_\frp(\psi_1,\psi_2)}$ is the maximal $\frp$-power dividing $\deg\tilde{\mu}$.
By the definition of $\pi_\frp$, there exist isogenies $\eta_i:\pi_\frp(\psi_i) \ra \psi_i$ of  degree prime to $\frp$ for $i=1,2$.
Since $\mu$ and $\tilde{\mu}$ are primitive,  we have  $\eta_2 \mu \hat{\eta}_1=a\cdot\tilde{\mu}$  and $\hat{\eta}_2\tilde{\mu}\eta_1=b\cdot\mu $ for some $a, b \in A$.
Comparing the $\frp$-part of the degree of them, we have 
$n \geq \delta_\frp(\psi_1,\psi_2)$ and $n \leq \delta_\frp(\psi_1,\psi_2)$.
Hence $n = \delta_\frp(\psi_1,\psi_2)$. 
The assertion  (3) immediately follows from (2) and Remark \ref{remppart}.
\end{proof}

By Lemma \ref{lempi}, the  projection  $\pi_\frp:\I \thra \Ip$ is compatible  with the $G_K$-action on $\I$.
Indeed, for any  $s \in G_K$ and $\psi \in \I$, we have 
$\delta_\frp({}^s\pi_\frp(\psi), {}^s\psi)=\delta_\frp(\pi_\frp(\psi), \psi)=0$ and hence 
 ${}^s\pi_\frp(\psi)=\pi_\frp({}^s\psi)$ by the uniqueness of $\pi_\frp({}^s\psi)$.
Consider the restricted product 
\[
{\prod_\frp}' \Ip:=\left\{ (\psi_\frp)_\frp \in \prod_\frp \Ip ;\ \psi_\frp=\phi\ \mbox{for almost all}\ \frp \right\}
\]
relative to $\phi$, where $\frp$ runs through all non-zero prime ideals of $A$.
Then 
 the map
\begin{equation}\label{eqglue}
\begin{array}{cccc}
 (\pi_\frp)_\frp : & \I & \lra &  {\prod'_\frp}\Ip \\
 & \psi & \mapsto &  (\pi_\frp(\psi))_\frp
\end{array}
\end{equation}
 is well-defined since  $\pi_\frp(\psi)=\phi$ for almost all $\frp$.

\begin{lem}
The map {\rm(\ref{eqglue})} is bijective.
\end{lem}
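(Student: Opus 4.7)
Injectivity of $(\pi_\frp)_\frp$ is already contained in Lemma \ref{lempi}(3): if $\pi_\frp(\psi_1)=\pi_\frp(\psi_2)$ for every prime $\frp$, then $\delta_\frp(\psi_1,\psi_2)=0$ for every $\frp$, and hence $\psi_1=\psi_2$ by Remark \ref{remppart}. The real content of the lemma is therefore surjectivity, which I plan to establish by explicitly constructing a preimage of an arbitrary $(\psi_\frp)_\frp \in {\prod}'_\frp \Ip$.

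Let $S := \{\frp : \psi_\frp \neq \phi\}$, which is finite by the restricted-product hypothesis. For each $\frp \in S$, pick a primitive isogeny $\mu_\frp : \phi \ra \psi_\frp$; by Corollary \ref{cor:primitive} together with $\psi_\frp \in \Ip$, it is cyclic of degree $\frp^{n_\frp}$ for some $n_\frp \geq 1$, and its kernel $\Lambda_\frp := \Ker\mu_\frp \subset {}_\phi\sepK$ is a cyclic $A$-module isomorphic to $A/\frp^{n_\frp}$. Since the ideals $\frp^{n_\frp}$ for distinct $\frp \in S$ are pairwise coprime, the sum $\Lambda := \bigoplus_{\frp \in S}\Lambda_\frp \subset {}_\phi\sepK$ is cyclic, isomorphic to $A/\frn$, where $\frn := \prod_{\frp \in S}\frp^{n_\frp}$. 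The cyclic $\frn$-isogeny $\mu:\phi\ra\psi$ with $\Ker\mu = \Lambda$, unique up to $\sepK$-equivalence by the discussion preceding Proposition \ref{propdual}, provides the candidate $\psi \in \I$.

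It remains to check that $\pi_\frq(\psi)=\psi_\frq$ for every prime $\frq$. For $\frq \notin S$, $\deg\mu = \frn$ is coprime to $\frq$, so $\delta_\frq(\phi,\psi)=0$ and Lemma \ref{lempi}(1) gives $\pi_\frq(\psi)=\phi=\psi_\frq$. For $\frq \in S$, the direct-sum decomposition $\Lambda = \Lambda_\frq \oplus \bigoplus_{\frp \in S\setminus\{\frq\}}\Lambda_\frp$ factors $\mu$ (uniquely up to $\sepK$-equivalence) as $\mu = \mu' \mu_\frq$, where the first factor agrees with the initially chosen $\mu_\frq:\phi\ra\psi_\frq$ because both are cyclic with the same kernel $\Lambda_\frq$, and $\mu':\psi_\frq \ra \psi$ has degree $\prod_{\frp \in S\setminus\{\frq\}}\frp^{n_\frp}$, coprime to $\frq$. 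Applying the characterization of $\pi_\frq$ in Remark \ref{remppart} to the primitive isogeny $\mu$, this forces $\pi_\frq(\psi) = \psi_\frq$, as required.

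The main step requiring care is the factorization $\mu = \mu'\mu_\frq$ compatible with the given decomposition of $\Lambda$, together with the identification of its first factor with the originally chosen $\mu_\frq$; this is a direct consequence of the cyclic-kernel construction of isogenies recalled before Proposition \ref{propdual} and of Lemma \ref{lemdecomp}, so no genuinely new ingredient is needed.
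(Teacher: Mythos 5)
Your proof is correct and takes essentially the same approach as the paper: injectivity comes from Lemma \ref{lempi}(3), and surjectivity is obtained by constructing $\psi$ as the codomain of the isogeny with kernel $\bigoplus_\frp \Ker\mu_\frp$. Your version simply spells out in more detail the verification that $\pi_\frq(\psi)=\psi_\frq$, which the paper condenses to the observation that $\delta_\frq(\psi_\frq,\psi)=0$.
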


\begin{proof}
The injectivity follows from Lemma \ref{lempi} (3).
To prove the surjectivity, take $(\psi_\frp)_\frp \in {\prod'_\frp} \Ip$ and  primitive isogenies $\mu_\frp:\phi \ra \psi_\frp$ for all $\frp$.
Since $\Ker\mu_\frp=0$ for almost all $\frp$, there is an isogeny $\mu:\phi \ra \psi$ with $\Ker\mu=\oplus_\frp\ker\mu_\frp$.
Since  $\psi \in \I$ and $\delta_\frp(\psi_\frp,\psi)=0$ for all $\frp$, we have $\pi_\frp(\psi)=\psi_\frp$.
Hence  {\rm(\ref{eqglue})} maps  $\psi$ to $(\psi_\frp)_\frp$.
\end{proof}

We may regard  $\Ip$ as a graph whose vertices are elements of $\Ip$ and  edges are $\frp$-isogenies between them. 
Moreover, we have:

\begin{prop}\label{proptree}
The graph $\Ip$ is an undirected regular tree of degree $\#( A{/}\frp)+1$.
For any vertices $\pi_\frp(\psi_1), \pi_\frp(\psi_2) \in \Ip$, the length of the path between them is equal to $\delta_\frp(\psi_1,\psi_2)$.
\end{prop}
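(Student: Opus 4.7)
The plan is to verify the three defining properties of a regular tree --- correct valence, connectedness, and acyclicity --- and then to read off the distance formula.

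For the local structure at a vertex $\psi \in \Ip$, the dual of a $\frp$-isogeny is again a $\frp$-isogeny by Proposition \ref{propdual} and Remark \ref{remdegree}, so the graph is naturally undirected. By Lemma \ref{propnoncmdegree} any two $\frp$-isogenies $\psi \to \psi'$ differ by a scalar in $\bF_q^\times$, so the $\sepK$-equivalence class of such an isogeny is determined by its kernel, and the edges at $\psi$ are in bijection with the cyclic $\frp$-kernels of $\psi$. Since $\psi[\frp] \cong (A/\frp)^{\oplus 2}$ is a two-dimensional $A/\frp$-vector space, it has exactly $\#(A/\frp)+1$ such lines, giving the claimed valence.

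For connectedness and the distance formula, given $\psi_1, \psi_2 \in \Ip$ I would take the primitive isogeny $\mu : \psi_1 \to \psi_2$. Since each $\psi_i$ admits a $\frp$-power isogeny to $\phi$, composing and using the non-CM hypothesis shows that $\deg \mu$ is a pure $\frp$-power, equal to $\frp^{\delta_\frp(\psi_1,\psi_2)}$ by the definition of $\delta_\frp$. Corollary \ref{cor:primitive} then says $\mu$ is cyclic, and Lemma \ref{lemdecomp} factors it as a chain of $\delta_\frp(\psi_1,\psi_2)$ many $\frp$-isogenies, producing a path of that length in $\Ip$.

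The crucial combinatorial input for acyclicity is: a composition $\Psi = \mu_n \cdots \mu_1$ of $\frp$-isogenies is cyclic iff the walk $\psi_0, \ldots, \psi_n$ is non-backtracking, meaning $\mu_{i+1} \neq \hat{\mu}_i$ for all $i$. The backward direction is easy: if $\mu_{i+1} = \hat{\mu}_i$, then the $A$-equivariance of isogenies shows $\Psi$ factors through $(\psi_0)_{a_\frp}$, whose kernel $\psi_0[\frp]$ is non-cyclic. For the forward direction I would induct on $n$: if the walk were non-backtracking but $\Ker\Psi$ non-cyclic, then $\Ker\Psi \supset \psi_0[\frp]$, so $\Psi = \eta \hat{\mu}_1 \mu_1$ for some $\eta:\psi_0 \to \psi_n$, giving $\mu_n \cdots \mu_2 = \eta \hat{\mu}_1$. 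The left side is cyclic by the induction hypothesis applied to the non-backtracking subwalk $\psi_1, \ldots, \psi_n$, so its kernel has a unique cyclic $\frp$-subkernel; this subkernel equals $\Ker \mu_2$ from the first decomposition and $\Ker \hat{\mu}_1$ from the second, forcing $\Ker \mu_2 = \Ker \hat{\mu}_1$ and thus making $\mu_2$ $\sepK$-equivalent to $\hat{\mu}_1$, contradicting non-backtracking.

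Granting this characterization, a non-backtracking closed walk of length $n \geq 1$ based at $\psi_0$ would produce a cyclic $\frp^n$-endomorphism of $\psi_0$; but $\phi$ has no CM, so $\End{\sepK}{\psi_0} = A$, and every endomorphism is of the form $(\psi_0)_a$ with non-cyclic kernel $(A/(a))^{\oplus 2}$, forcing $n = 0$. Hence $\Ip$ is a tree, and the path constructed in the second paragraph is the unique geodesic between $\psi_1$ and $\psi_2$, so its length $\delta_\frp(\psi_1,\psi_2)$ is the graph distance. The main obstacle I anticipate is this inductive characterization of cyclic compositions in terms of non-backtracking walks; once that is in place, the rest of the argument is essentially formal.
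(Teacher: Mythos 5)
Your proof is correct and follows the same overall framework as the paper's: the valence computation via counting cyclic $\frp$-kernels with the non-CM hypothesis ruling out collisions, connectedness via Proposition~\ref{proppc} and Lemma~\ref{lemdecomp} applied to a primitive isogeny, and the distance formula read off from $\deg\mu=\frp^{\delta_\frp(\psi_1,\psi_2)}$. Where you go further is acyclicity. The paper asserts that the unique decomposition of the primitive isogeny $\mu$ ``determines the unique path,'' implicitly relying on the fact that every non-backtracking walk in $\Ip$ arises from a cyclic isogeny; you state and prove this characterization (composition is cyclic iff the walk is non-backtracking, by an induction that traces the unique $\frp$-subkernel through the chain), and then finish by noting that a non-backtracking closed walk would give a cyclic $\frp^n$-endomorphism of a non-CM module, impossible since $\End{\sepK}{\psi_0}=A$ forces every non-unit endomorphism to have a non-cyclic kernel $(A/(a))^{\oplus 2}$. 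This is precisely the lemma the paper leaves tacit, and your version makes the tree argument airtight; the only small gap relative to the statement is that you work with vertices $\psi_i\in\Ip$ directly and should invoke Lemma~\ref{lempi}(2) at the end to pass from $\delta_\frp(\pi_\frp(\psi_1),\pi_\frp(\psi_2))$ to $\delta_\frp(\psi_1,\psi_2)$ for arbitrary $\psi_1,\psi_2\in\I$.
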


\begin{proof}
Since the dual of any $\frp$-isogeny is also of degree $\frp$ by Remark \ref{remdegree}, the graph $\Ip$ is undirected.
For any vertex $\pi_\frp(\psi) \in \Ip$, the number of cyclic $\frp$-kernel of $\pi_\frp(\psi)$ is $\# (A{/}\frp) + 1$.
By the absence of CM, such submodules determine distinct $\frp$-isogenies and so $\pi_\frp(\psi)$
 is of degree  $\# (A{/}\frp) +1$.
Take distinct two  vertices  $\pi_\frp(\psi_1), \pi_\frp(\psi_2) \in \Ip$ and 
  a  primitive isogeny $\mu:\pi_\frp(\psi_1) \ra \pi_\frp(\psi_2)$.
Then $\deg\mu=\frp^n$ for some $n > 0$.
Since  $\mu$ is cyclic by Proposition \ref{proppc}, it  uniquely decomposes as
$\mu=\mu_n\mu_{n-1}\cdots\mu_1$ with $\frp$-isogenies $\mu_i$ by Lemma \ref{lemdecomp}.
 Then $\mu$ determines the unique path joining $\pi_\frp(\psi_1)$ and $\pi_\frp(\psi_2)$ and hence  $\Ip$ is a tree. 
By Lemma \ref{lempi} (2), the length of the path is  $\delta_\frp(\psi_1,\psi_2)$.
 \end{proof}

By Lemma \ref{lemdelta},  the $G_K$-action on the set $\Ip$ preserves the length of paths joining any vertices. 
Hence  $G_K$ acts on the tree $\Ip$.
Denote by 
$
\langle \phi \rangle:=\{ {}^s\phi;\ s \in G_K\}
$ the finite subset of $\I$ consisting of all $G_K$-conjugates of $\phi$.
Set
\[
\Lp:=\{ \pi_\frp(^s\phi) ; s \in G_K\} \subset \Ip
\]
for each $\frp$.
Notice that   $\Lp=\{\phi\}$ for almost all $\frp$.

Define $\cT(\Lp)$ to be the minimal finite subtree of $\Ip$ whose vertex set contains $\Lp$.
Such subtree is uniquely determined.   
Since  any terminal vertex of $\cT(\Lp)$ belongs to $\Lp$ and  $G_K$ acts on $\Lp$ as permutations, the  subtree $\cT(\Lp)$ inherits a $G_K$-action from $\Ip$.   
Then  there is a unique vertex or edge of $\cT(\Lp)$ fixed by $G_K$, which is called the {\it center} of $\cT(\Lp)$.
Indeed, the central vertex or edge of a longest path joining two points in $\Lp$ is fixed by $G_K$.
Such a vertex or an edge does not depend on the choice of longest paths.
Hence the center of $\cT(\Lp)$ is well-defined.

For any $\phi' \in \I$, consider the finite set $\pi_\frp\langle\phi'\rangle$ in the same way.
Then for  two subtrees $\T$ and $\cT(\pi_\frp\langle\phi'\rangle)$ of $\Ip$,  
 we have the following.

\begin{lem}\label{lemcenter}
The center of $\T$ is an edge if and only  if the center of $\cT(\pi_\frp\langle\phi'\rangle)$ is an edge.
In this case, the centers of $\T$ and $\cT(\pi_\frp\langle\phi'\rangle)$ coincide.  
\end{lem}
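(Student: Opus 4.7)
The plan is to study the $G_K$-action on the ambient tree $\Ip$ by a Bass--Serre style dichotomy and to show that both centers are controlled by the same global data. Since each vertex of $\Ip$ corresponds to the $\pi_\frp$-projection of a Drinfeld $A$-module defined over a finite extension of $K$, every $G_K$-orbit in $\Ip$ is finite and the action has bounded orbits. Consequently exactly one of the following holds: either (a) the set $F$ of $G_K$-fixed vertices of $\Ip$ is a nonempty subtree and no edge of $\Ip$ is inverted by $G_K$, or (b) $G_K$ has no fixed vertex in $\Ip$ but inverts a unique edge $e_0$. The mutual exclusion comes from the observation that if $v$ were a fixed vertex and $e=(u,u')$ an inverted edge, some $g \in G_K$ would send $u \mapsto u'$ while fixing $v$, forcing $d(v,u')=d(v,u)$ and simultaneously $d(v,u')=d(v,u)+1$ via the unique path through $u$.

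In Case (b), I will argue that every $G_K$-stable finite subtree $S \subseteq \Ip$ has center $e_0$. The $G_K$-action on $S$ has no fixed vertex (inherited from $\Ip$), so Bass--Serre forces it to invert some edge of $S$; by the global uniqueness of $e_0$ this inverted edge is $e_0$ itself, so $e_0 \in S$. The center of $S$ is $G_K$-fixed, cannot be a vertex, and the only $G_K$-invariant edge in $S$ is $e_0$, so the center equals $e_0$. Applied to $S = \T$ and $S = \cT(\pi_\frp\langle\phi'\rangle)$, both centers equal $e_0$, establishing the coincidence statement.

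In Case (a), for any finite $G_K$-orbit $O \subseteq \Ip$ the nearest-point projection $\pi_F \colon O \to F$ is $G_K$-equivariant, and since $F$ is pointwise $G_K$-fixed it is constant; write $p_O \in F$ for the common value. The heart of the argument is to show $\cT(O) \cap F = \{p_O\}$. If $O$ were trapped in a single branch of $\Ip \setminus \{p_O\}$ with no $F$-vertices, the $G_K$-action on that branch would have no fixed vertex and would therefore invert an edge by Bass--Serre, contradicting Case (a); hence $O$ spans multiple such branches and $p_O \in \cT(O)$. Any other $F$-vertex in $\cT(O)$ would lie strictly between some $v \in O$ and $p_O$, contradicting the minimality of $p_O$ as the nearest $F$-vertex to $v$. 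So $\cT(O) \cap F = \{p_O\}$. The center of $\cT(O)$ is $G_K$-fixed; as Case (a) excludes inverted edges, an edge-center would have both endpoints in $\{p_O\}$, which is impossible, so the center is the vertex $p_O$. In particular the centers of both $\T$ and $\cT(\pi_\frp\langle\phi'\rangle)$ are vertices in Case (a).

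Combining the two cases yields the lemma: both centers are edges precisely in Case (b), in which case they coincide with $e_0$, and otherwise both are vertices. The main obstacle will be the Case (a) claim $\cT(O) \cap F = \{p_O\}$, whose two halves each exploit the non-inversion property of Case (a) together with the $G_K$-equivariance of the nearest-point projection onto $F$; once this is in place, the rest is standard tree combinatorics and an appeal to Bass--Serre uniqueness.
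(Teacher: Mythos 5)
Your proof is correct and hinges on the same underlying obstruction as the paper's --- the parity argument showing that a $G_K$-fixed vertex and an inverted edge cannot coexist in $\Ip$ --- but it is packaged quite differently. The paper works directly from the hypothesis: assuming the center of $\T$ is an edge $\{\pi_\frp(\psi_1),\pi_\frp(\psi_2)\}$, it produces an $s\in G_K$ that swaps the two endpoints (otherwise $\pi_\frp\langle\phi\rangle$ would lie entirely on one side of the edge, contradicting its centrality), and then the parity argument immediately rules out $G_K$-fixed vertices and any other $G_K$-fixed edge, so the center of $\cT(\pi_\frp\langle\phi'\rangle)$, being a $G_K$-fixed vertex or edge, must coincide with $\{\pi_\frp(\psi_1),\pi_\frp(\psi_2)\}$; the converse is by symmetry. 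You instead establish a global dichotomy for the $G_K$-action on $\Ip$ via Serre's bounded-orbit fixed-point theorem and then classify every $G_K$-stable finite subtree in each case. Both are valid; your route is more systematic and makes the global picture explicit (in particular pinning down the vertex center as $p_O$ in Case~(a), more than the lemma requires), while the paper's is shorter and entirely self-contained, using no tree fixed-point theorem. One simplification worth noting for your Case~(a), which you flagged as the main obstacle: rather than the branch-plus-Bass--Serre argument for $\cT(O)\cap F=\{p_O\}$, if the center of $\cT(O)$ were a non-inverted edge $\{a,b\}$ with $a,b\in F$, take $u,v\in O$ realizing the diameter so that $d(u,a)=d(v,b)=\tfrac{d(u,v)-1}{2}$; then $d(u,p_O)\le d(u,a)$ and $d(v,p_O)\le d(v,b)$ give $d(u,p_O)+d(p_O,v)\le d(u,v)-1$, violating the triangle inequality. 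This yields the vertex conclusion directly without a second appeal to the fixed-point theorem.
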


\begin{proof}
Suppose that the center of $\T$ is an edge $\{ \pi_\frp(\psi_1), \pi_\frp(\psi_2)\}$.
Then we can take an element  $s \in G_K$ such that $\pi_\frp(^s\psi_1)=\pi_\frp(\psi_2)$ and 
$\pi_\frp(^s\psi_2)=\pi_\frp(\psi_1)$.
Indeed, if not, then all of $\pi_\frp\langle\phi\rangle$ lie on one side of the edge
 $\{\pi_\frp(\psi_1), \pi_\frp(\psi_2) \}$, which is impossible.
Fix   such $s\in G_K$.
To prove the lemma, it suffices to show  that $\Ip$ has no $G_K$-fixed vertices and no $G_K$-fixed edges distinct from
$\{\pi_\frp(\psi_1), \pi_\frp(\psi_2) \}$.
If $\Ip$ has a  $G_K$-fixed vertex $\pi_\frp(\psi)$, then  
\[
\delta_\frp \left(   \pi_\frp(\psi),  \pi_\frp(\psi_1) \right)
=\delta_\frp \left(   \pi_\frp({}^s\psi),  \pi_\frp({}^s\psi_1) \right)
=\delta_\frp \left(   \pi_\frp(\psi),  \pi_\frp(\psi_2) \right).
\]
It means  that $\pi_\frp(\psi)$ has the same distance from $\pi_\frp(\psi_1)$ and $\pi_\frp(\psi_2)$;
however, this is impossible.
By the similar observation, we also see that any $G_K$-fixed edge  of $\Ip$ coincides with $\{ \pi_\frp(\psi_1), \pi_\frp(\psi_2)\}$.
Hence it is the center of $\cT(\pi_\frp\langle\phi'\rangle)$ for all $\phi' \in \I$.
\end{proof}

Thus we readily give the proof of Theorem \ref{thmmain}.

\begin{proof}[{Proof of Theorem \ref{thmmain}}]
Since the vertex set  of $\T$ is the singleton $\{\phi \}$ for almost all $\frp$, there are only finitely many prime ideals $\frp_1, \frp_2,\ldots, \frp_n$ such that 
the centers of $\cT(\pi_{\frp_i}\langle\phi\rangle)$ are  edges.
Set 
$\frn:=\prod_{i=1}^n\frp_i$, which depends only on the isogeny class of $\phi$ by Lemma \ref{lemcenter}.
For each $1 \leq i \leq n$, let $\{\psi_{\frp_i}, \psi_{\frp_i}'\}$ be the center of $\cT(\pi_{\frp_i}\langle\phi\rangle)$, so that there is  a $\frp_i$-isogeny 
$
\psi_{\frp_i} \ra \psi_{\frp_i}'$.
Using the bijection {\rm(\ref{eqglue})},  
we can take  two Drinfeld $A$-modules  $\psi,  \psi'\in \I$ such that 
\[
\pi_\frp(\psi)=\left\{
\begin{array}{lcc}
\psi_\frp & \mbox{if} & \frp \mid \frn \\
\mbox{the center of}\ \T &  \mbox{if} &  \frp \nmid \frn
\end{array}
\right.
\]
and 
\[
\pi_\frp(\psi')=\left\{
\begin{array}{lcc}
\psi'_\frp & \mbox{if} & \frp \mid \frn \\
\mbox{the center of}\ \T &  \mbox{if} &  \frp \nmid \frn
\end{array}
\right.
\]
for any $\frp$.
By construction,  it follows that $\delta_\frp(\psi, \psi') =1$ if  $\frp \mid \frn$, and that $\delta_\frp(\psi, \psi') =0$ if  $\frp \nmid \frn$.
Hence  there exists a cyclic  $\frn$-isogeny $\psi \ra \psi'$ and  we obtain 
 a $\sepK$-rational point $x:=[\psi\ra\psi'] \in Y_0(\frn)(\sepK)$.

Let $\cP:=\cW(\frn)x$ be the $\cW(\frn)$-orbit of $x$.
If  $\cP$ is $G_K$-stable, then it  gives raise to a $K$-rational point of $Y_*(\frn)$ and hence  (i) holds. 
Therefore it suffices to show that for any $s \in G_K$, there exists an ideal  $\frm_s \mid \frn$ such that ${}^sx=w_{\frm_s} x$.
Let $s \in G_K$.
For each  $\frp$ with  $\frp \mid \frn$, since $\{\pi_\frp(\psi), \pi_\frp(\psi') \}$ is the center of $\T$, we see that 
$\pi_\frp(^s\psi)$ is either $\pi_\frp(\psi)$ or $\pi_\frp(\psi') $.
Define $\frm_s $ to be the product of all prime factors $\frp \mid \frn$ satisfying 
$
\pi_\frp(^s\psi)=\psi'_\frp.
$
If $\frp$ satisfies $\frp \nmid \frn$, then  $\pi_\frp(^s\psi)=\pi_\frp(\psi)$ and $\pi_\frp(^s\psi')=\pi_\frp(\psi')$  by construction.
Thus we have
\[
\pi_\frp(^s\psi)=\left\{
\begin{array}{ccc}
\pi_\frp(\psi') & \mbox{if} & \frp \mid \frm_s \\
\pi_\frp(\psi) &  \mbox{if} &  \frp \nmid \frm_s
\end{array}
\right.
\]
and 
\[
\pi_\frp(^s\psi')=\left\{
\begin{array}{ccc}
\pi_\frp(\psi) & \mbox{if} & \frp \mid \frm_s \\
\pi_\frp(\psi') &  \mbox{if} &  \frp \nmid \frm_s
\end{array}
\right.
\]
for any  $\frp$.
Let us decompose $\frn$ as $\frn=\frm_s \frn'$.
Let    $\psi_{\frm_s} \ra \psi'_{\frm_s}$ be  a $\sepK$-rational cyclic $\frn$-isogeny representing
$w_{\frm_s}x$.
As in  the diagram (\ref{diagram}), we have 
\[
\xymatrix{
\psi \ar[rr]^-\frn \ar[rd]_-{\frm_s}& & \psi' \\
 & \psi_{\frm_s} \ar[rr]^-\frn \ar[rd]_-{\frm_s} \ar[ru]^{\frn'}& & \psi'_{\frm_s} \\
 & & \psi \ar[ru]_-{\frn'}& \hspace{10pt},
}
\]
where the notation such as $\overset{\frn}{\lra}$ means a cyclic $\frn$-isogeny.
Then we have 
\[
\delta_\frp(\psi_{\frm_s},{}^s\psi)=\delta_\frp\left(\pi_\frp(\psi_{\frm_s}),\pi_\frp({}^s\psi)\right)=0 
\]
for any $\frp$.
Hence   $\psi_{\frm_s}={}^s\psi$.
Applying similar arguments to dual isogenies of $\psi \ra \psi'$ and $\psi_{\frm_s } \ra \psi'_{\frm_s}$, we also  have $\psi_{\frm_s}'={}^s\psi'$.
Consequently, $\Theta(w_{\frm_s}x)=\Theta({}^sx)$  
and hence  $w_{\frm_s}x={}^sx$ by  Lemma \ref{lemmodular}.

It remains to check (ii).
To do this,
let $\frn'$ be an ideal satisfying (i).
Then there is a point $y_* \in Y_*(\frn')(K)$ such that $\cP(y_*)$ gives raise to a family of Drinfeld $A$-modules isogenous to $\phi$.
Here  
assume that  $\frn \nmid \frn'$ and 
take a prime factor $\frp$ of $\frn$ with $\frp \nmid \frn'$.
This assumption implies that  all Drinfeld $A$-modules arising from $\cP(y_*)$ have the same image under $\pi_\frp$ since they are joined with each other by isogenies of degree prime to $\frp$.
Hence $\cP(y_*)$ determines  a $G_K$-fixed vertex of $\Ip$.
However, since  now the center of $\T$ is an edge, there are no $G_K$-fixed vertices of $\Ip$ by the proof of Lemma \ref{lemcenter}.
\end{proof}

\begin{rem}
For a finite extension $k$ of $\bQ$,
as  a higher-dimensional generalization of $\bQ$-curves,  
 the notion of {\it abelian $k$-varieties} are  studied by many people.
For example, it is known that   abelian $k$-varieties with some conditions (so-called {\it building blocks}) can be  defined up to isogeny over a  polyquadratic extension of $k$; see \cite{Rib94} and \cite{Pyl??}. 
In \cite{GM}, abelian $k$-surfaces  with quaternionic multiplication are parametrized by $k$-rational points of Atkin-Lehner quotients of Shimura curves.
In the function field setting, as  higher-dimensional generalizations of Drinfeld $A$-modules and analogues of abelian varieties, Anderson \cite{Anderson} defined {\it abelian $t$-modules} and the dual notion of {\it $t$-motives}.
For such objects, we can consider   the notion of  ``virtually $K$-rationality'' in the same way.
However, our proof in this paper depends on special properties of rank-two Drinfeld $A$-modules and so 
it may be  difficult to extend  our arguments to  higher-dimensional cases.

\end{rem}


\section*{Acknowledgements}
The author would like to thank Professor Yuichiro Taguchi for his helpful comments and advises on this paper.
The author also would like to thank Professor Takeshi Saito who recommended using the terminology 
``virtually $K$-rational''.
This work is supported by Foundation of Research Fellows, The Mathematical Society of Japan.



\hspace{50pt}

Department of Mathematics, 

Tokyo Institute of Technology

2-12-1 Oh-okayama, Meguro-ku, 

Tokyo 152-8551, Japan

{\it  E-mail address} : \email{\tt okumura.y.ab@m.titech.ac.jp}

\end{document}